\theoremstyle{plain}
\newtheorem{theorem}{Theorem}
\newtheorem{proposition}[theorem]{Proposition}
\newtheorem{corollary}[theorem]{Corollary}
\newtheorem{lemma}[theorem]{Lemma}
\newtheorem{assumption}[theorem]{Assumption}
\theoremstyle{definition}
\newtheorem{definition}[theorem]{Definition}
\newtheorem{example}[theorem]{Example}
\theoremstyle{remark}
\newtheorem{remark}[theorem]{Remark}
\def\mc{\mathcal}
\def\CC{\mc C}
\def\Cov{\ms{Cov}}
\def\DD{\mc D}
\def\De{\Delta}
\def\E{\mathbb E}
\def\GG{\mc G}
\def\Ib{I_{\ms{b}}}
\def\R{\mathbb R}
\def\Var{\ms{Var}}
\def\Z{\mathbb Z}
\def\a{\alpha}
\def\ba{\,|\,}
\def\been{\begin{enumerate}}
\def\bee{\begin{example}}
\def\bel{\begin{lemma}}
\def\bepr{\begin{proposition}}
\def\bep{\begin{proof}}
\def\bet{\begin{theorem}}
\def\bec{\begin{corollary}}
\def\bede{\begin{definition}}
\def\ende{\end{definition}}
\def\bs{\boldsymbol}
\def\b{\beta}
\def\co{\colon}
\def\ddd{\mathfrak d}
\def\dist{\ms{dist}}
\def\d{{\rm d}}
\def\enen{\end{enumerate}}
\def\ene{\end{example}}
\def\enl{\end{lemma}}
\def\enpr{\end{proposition}}
\def\enp{\end{proof}}
\def\ent{\end{theorem}}
\def\enc{\end{corollary}}
\def\es{\varnothing}
\def\e{\varepsilon}
\def\ff{\infty}
\def\f{\frac}
\def\g{\gamma}
\def\im{\item}
\def\ms{\mathsf}
\def\one{\mathbbmss{1}}
\def\pa{\partial}
\def\r{\rho}
\def\sm{\setminus}
\def\su{\subseteq}
\def\s{\sigma}
\def\tff{\uparrow\infty}
\def\ti{\times}
\def\vp{\varphi}
\def\wt{\widetilde}
\def\w{\wedge}
\def\su{\subseteq}
\def\bt{\b_{n,\eta}}
\def\FF{\mc F}
\def\mc{\mathcal}
\def\CC{\mc C}
\def\Cov{\ms{Cov}}
\def\DD{\mc D}
\def\De{\Delta}
\def\E{\mathbb E}
\def\GG{\mc G}
\def\Ib{I_{\ms{b}}}
\def\R{\mathbb R}
\def\Var{\ms{Var}}
\def\Z{\mathbb Z}
\def\a{\alpha}
\def\ba{\,|\,}
\def\been{\begin{enumerate}}
\def\bee{\begin{example}}
\def\bel{\begin{lemma}}
\def\bepr{\begin{proposition}}
\def\bep{\begin{proof}}
\def\bet{\begin{theorem}}
\def\bec{\begin{corollary}}
\def\bede{\begin{definition}}
\def\ende{\end{definition}}
\def\bs{\boldsymbol}
\def\b{\beta}
\def\co{\colon}
\def\ddd{\mathfrak d}
\def\dist{\ms{dist}}
\def\d{{\rm d}}
\def\enen{\end{enumerate}}
\def\ene{\end{example}}
\def\enl{\end{lemma}}
\def\enpr{\end{proposition}}
\def\enp{\end{proof}}
\def\ent{\end{theorem}}
\def\enc{\end{corollary}}
\def\es{\varnothing}
\def\e{\varepsilon}
\def\ff{\infty}
\def\fr{\frac}
\def\g{\gamma}
\def\im{\item}
\def\ms{\mathsf}
\def\one{\mathbbmss{1}}
\def\pa{\partial}
\def\r{\rho}
\def\sm{\setminus}
\def\su{\subseteq}
\def\s{\sigma}
\def\tff{\uparrow\infty}
\def\ti{\times}
\def\vp{\varphi}
\def\wt{\widetilde}
\def\w{\wedge}
\def\su{\subseteq}
\def\bt{\beta_n}
\def\btt{\beta^{+}_{n}}
\def\FF{\mc F}
\def\wt{\widetilde}
\def\CC{\mc C}
\def\Om{\Omega}
\def\tbnp{\bar \b^+_n}
\def\blue{}
\renewcommand{\f}{  \mathsf f}
 \renewcommand{\E}{\mathsf E}
\newcommand{\equlaw}{\stackrel{( d)}{ = }}
 \newcommand{\C}{    \mathsf C}  
\renewcommand{\f}{  \mathsf f}
 \newcommand{\Hess}{\text{\rm{Hess}}}  
 \newcommand{\Ys}{Y^{\partial }}
 \renewcommand{\E}{\mathsf E}
\newcommand{\W}{\mathcal W}
\title{Functional Central Limit Theorem for topological functionals of Gaussian critical points}
\begin{document}

\author{Christian Hirsch}
\author{Raphaël Lachièze-Rey}
\address[Christian Hirsch]{Department of Mathematics\\ Aarhus University \\ Ny Munkegade, 118, 8000, Aarhus C, Denmark.}
\address[Raphaël Lachièze-Rey]{INRIA \\ Paris, France.}
\email{raphael.lachieze-rey@math.cnrs.fr}
\email{hirsch@math.au.dk}
\maketitle

\newcommand{\x}{ \mathcal{X} }

 {\bf Abstract}
 We consider Betti numbers of the excursion of a smooth Euclidean Gaussian field restricted to a rectangular window, in the asymptotics where the window grows to $ \mathbb R^d$. With motivations coming from Topological Data Analysis, we derive a functional Central Limit Theorem where the varying argument is the thresholding parameter, under assumptions of regularity and covariance decay for the field and its derivatives. We also show fixed- and multi-level CLTs coming from martingale based techniques inspired from the theory of geometric stabilisation, and limiting non-degenerate variance.\\

 {\bf Keywords:} Gaussian fields, geometric excursions, Betti numbers, functional CLT, stabilisation, topological data analysis.

%
%
\section{Introduction}
\label{sec:int}
%
%

Stationary  Gaussian fields are a dominant model to represent  spatially homogeneous continuous data in several dimensions. In particular, the  {\it excursion sets}, or  {\it upper level sets}, obtained by thresholding at a given level,  are the topic of intensive research across several fields, and under many angles: image analysis, percolation, materials science, neurology, cosmology. Their geometric characteristics, such as the volume, perimeter, or topological indexes, have been the subject of several works on limit theorems. See for instance  \cite{AT07,SurveyBeliaev,GDFW} for theoretical tools, results, and case studies for smooth Gaussian fields.

Recently, many disciplines have adopted methods from the mathematical domain of topological data analysis (TDA) to analyse data exhibiting complex topological features. In this area, the  Betti numbers are the key tool, which, loosely speaking, describe the number of holes of a fixed dimension contained in a data set. In order to put TDA on a rigorous statistical foundation and to allow for the derivation of hypothesis tests, it is crucial to develop the theory of normal approximation for the Betti numbers. This has been successfully achieved for topology models based on the Poisson point process \cite{shirai,b1}. While point process models are interesting, when looking into applications,  random-field models are of central importance \cite{astro2,astro1,torquato}. However, when considering the asymptotics of persistent Betti numbers, then much less is known. One exception is a work on discretely-indexed Gaussian excursions, which is however restricted to the sparse regime \cite{thoppe}.

Central limit theorems (CLTs) for geometric functionals have been a topic of intensive research in the last 20 years. Here, the most thoroughly studied model is that of a Poisson point process in Euclidean space. For instance, we refer the reader to \cite{mal_shot,mal_stab,mehler,yukCLT} and the references therein.
Motivated by the emerging domain of TDA, we focus in this work on topological indexes of the fields.
There has been a lot of progress in the normal approximation of geometric functionals on Poisson processes, mostly based on the Stein-Malliavin technique developed by Nourdin and Peccati, see the general theory in the monograph  \cite{NPbook}, focused on Gaussian input. This method was exploited by Estrade \& L\'eon \cite{leon} to establish the first CLT for topological functionals of Gaussian excursions, precisely for the Euler characteristic.   When one wishes to compute chaotic decompositions, as in the latter work, the arguments critically rely on the local nature of Euler characteristic and therefore do not generalise to the more globally defined Betti numbers. Much more recently, \cite{bel} applied a general martingale technique to derive a CLT for the component count of Gaussian random fields under suitable moment assumptions, in the spirit of the theory of geometric stabilisation  \cite{yukCLT}. Later, McAuley  \cite{maca} established a similar CLT for geometric functionals of the unbounded connected component.

The results mentioned above in \cite{b1,shirai} concern a CLT at a fixed level. However, in applications, it is typically not at all clear what is the {\blue fixed level that should be chosen}.  Also, TDA is concerned with the evolution of the topology when a real parameter is varying, this role is played here by the level $ u$. Therefore, it is essential to have a functional CLT that allows to vary the level.  In our main result, we derive such a result under suitable moment conditions where the level is allowed to vary in sub-critical and super-critical regime of random-field percolation. We note that even in the point-process case such a process-level result is only known to hold for quasi-1D domains or under truncation of the Betti number \cite{cyl,svane}.  Here, {the particular }challenge in functional CLTs is the question of tightness for topological functionals.  One important ingredient in our arguments is the derivation of exponential tails for the diameter of bounded components coming from continuous percolation.

%
%
Besides tightness, we derive  CLTs at fixed levels for a large class of non-local topological functionals. We also discuss positivity of the limiting variance. While we can reuse some parts of  the strategies from the component-case considered in \cite{bel}, the general Betti numbers require a more careful geometric analysis.

%
%
%
The rest of the manuscript is organised as follows. In Section \ref{sec:gauss} we introduce the Gaussian fields and recall some useful properties  about them. In Section  \ref{sec:morse-topo}, we introduce the topological functionals and their fundamental properties. In Section \ref{sec:main-results} we state our main results. In Section \ref{sec:gaus}, we discuss key results on Gaussian random fields such as the white-noise decomposition. Section \ref{sec:topoa} contains important topological preliminaries that will be used in the proofs.
 In Section \ref{sec:fix_clt}, we prove the fixed-level CLT, and show the positivity of the limiting variance. Finally, the proof of the functional CLT is given in Section \ref{sec:prf-fclt}.

\section{Results}
\label{sec:gauss}

A stationary Gaussian field is a random function $F:\R^d\to \R $ with Gaussian finite-dimensional marginals $(F(t_1),\dots, F(t_d))$, and which is distributionally invariant under translations, i.e. $ F(t + \cdot )\equlaw F$ for $ t\in \R^d$. We assume furthermore that the field is centered with unit variance (i.e. $ F(x)\sim \mc N(0,1)$ for $ x\in \R^d$), in which case the law of $ F$ is characterised by the covariance function
\begin{align*}
\C(x)= \mathbf E(F(0)F(x)),x\in \R^d.
\end{align*}We state assumptions ensuring asymptotic independence and regularity of the field.

\begin{assumption} [Decay, regularity, and non-degeneracy assumptions]
\label{ass:gaussian-intro}~
	$ \C$ must be written $ \C = q\star q$ for some symmetric function $ q:\R^d\to \R$ of class $ \mc C^{l_0 + 2}$ for some $ l_0>2^{13}$ such that for each multi-index $ \a $ with $ | \a | \le 3$, $ | \partial ^{\a }q(x) | \le c(1 +  \|x\|)^{-\eta }$ for some $c>0, \eta >{255}d^2$.

	Also  we assume that $ \int q\ne 0${, \blue which is a condition we use to show the positivity of the limiting variance in our CLT.}
\end{assumption}

By  \cite[Section 1]{AW}, this assumption implies in particular that the Gaussian field a.s. has sample paths of class $ \mc C^{l_0 + 1}$. In practice, only class $ \mc C^3$ is necessary to properly assess quantitatively topological properties of the field, but   by  \cite{GassSte} this regularity yields that the number of critical points has locally finite moments of order $ l_0$ (Theorem \ref{thm:gass}), which is useful for using H\"older's inequality in several parts of the proof.

We study the topological properties of the excursion sets
\begin{align*}\E(u) = \E(u;F) = \{x:F(x)\ge u\},u\in \R,
\end{align*}
intersected through a rectangular window $ W\su\R^d$. We consider topological functionals applied to the connected components of $ \E(u)\cap W$. It is more convenient to work with components which do not touch $ \partial W$, and we shall assume that these components have a controllable size. For $ A\su \R^d,$ denote by $  \mathscr C(A)$ the set of connected components of $ A$. For bounded $Q\su A$, let $   \mathscr  C(A;Q)$ the  set of $ C\in \mathscr C(A)$ with $ C\cap Q\ne \es $, and let $ \mc C(A;Q)$ the union of the $ C\in \mathscr C(A;Q)$.  {Let furthermore $  \mathscr  C(A;Q;B)$ the components of $  \mathscr  C(A;Q)$ which do not touch $ B$, we will   investigate $  \mathscr  C(A;Q;\partial W)$ for some compact $ W$ containing $ Q$, hence such components are bounded.  {Such components at level $ u = 0$ are coloured in gray at Figure  \ref{fig:example} below}}%

The functional CLT established in this paper {\blue is valid }on an interval $ I$ away from the critical regime. To make this precise, we henceforth let $ u_c^s\ge 0$ be the threshold of sharp phase transition{, i.e. the infimum of values $u'$ such that for any $ a>0$ there is $ c_a<\infty $ such that for any $ u>u'$
\begin{align*}\mathbf P\big(\textrm{diam}(\mc C (\{F\ge u\},\{0\})\geqslant r) + \textrm{diam}(\mc C (\{F\ge -u\},\{0\}))\ge r\big)\le c_ar^{-a}.
\end{align*}
Remark that it is not automatic that $ u_c^s<\infty $, this condition is discussed below. Before that, we first state our assumption on the interval $I$.}

\begin{assumption}
\label{ass:perco}
 $ I\su (u_c^s,\infty )\cup (-\infty ,-u_c^s)$.
 \end{assumption}
 {\blue We impose Assumption \ref{ass:perco} primarily to control some long-range dependencies occurring in the proof. Note that in the component-count CLT from \cite{bel} no such percolation assumption is necessary. Therefore, we believe that it is plausible that the CLT for the Betti numbers continues to hold even if Assumption \ref{ass:perco} is violated.}

 Most available results actually give exponential decay. This assumption is discussed at  Section \ref{ss:perc}. {Let us discuss two widely used families of Gaussian fields.
 \been
\item For $ \nu >0$, the Mat\'ern covariance is of the form
\begin{align*}
 \C_\nu (x) = c_\nu \|x\|^\nu K_\nu (x), x\in \mathbb{R}^{ d},
\end{align*}
for some constant $ c_\nu >0$, where $K_\nu  $ is the modified Bessel function of the second type of index $ \nu $. It has spectral density of the form $ S_\nu (u) : = \hat \C_v(u)=  c_\nu '(1 + \|u\|^{ 2})^{ -\nu -d/2}$, see e.g.~\cite{stein}.  Henceforth, we will often use the close relation between moment and regularity when switching from the original density to the spectral density, see e.g.~\cite{fourier}.
Define $ q_\nu : =  (2\pi )^{ -d} \widehat{ \sqrt{ S_\nu }}$ so that $ \C_\nu  = q_\nu   \star q_\nu $.  For $ \nu $ sufficiently large, $ S_\nu $ has a moment of order $ 2l_0 + 4$ hence $ q_\nu $ is of class $ \mc C^{ l_0 + 2}$.  
The decay of the $ \partial _\a q$ at infinity is more subtle, as it is related to the regularity of $ \sqrt{S_\nu }$. For $ k\in \mathbb N$, $\|x\|^{ 2k}\partial _\a  q(x) $  is the Fourier transform of $s_{ k,\a }(u) =  D _{ 2k} (u^{   \a   } \sqrt{ S_\nu (u)})$ for some derivation operator $ D_{ 2k}$ of order $ 2k$. Hence, once again, for $ \nu $ sufficiently large, $ s_{ k,\a }$ is integrable and $ \|x\|^{ 2k}\partial _\a q(x)$ is bounded, and Assumption  \ref{ass:gaussian-intro} is satisfied. 
\item The second example is the Bargmann-Fock covariance, which is of the form
\begin{align*}
\C(x) = \exp(-\|x\|^{ 2}/2), x\in \mathbb{R}^{ d},
\end{align*}
and is defined in any dimension. Using similar steps as in the Mat\'ern case, the Bargmann-Fock field is seen to satisfy Assumption \ref{ass:gaussian-intro}. 
\enen
As discussed at Section  \ref{ss:perc}, these examples also satisfy Assumption \ref{ass:perco} for sufficiently high $ \nu $ and appropriate $ u_c^s$.}

\subsection{Topological functionals}
\label{sec:morse-topo}

For simplicity we assume here that  $W = W_n$ is as in \cite[Section 3.2]{bel} given by a growing sequence of boxes of bounded aspect ratio. More precisely, for $W = [a_1, b_1] \ti \cdots \ti [a_d, b_d]$,  let $\ms{asp}(W) := \min_i (b_i - a_i) / \max_i (b_i-a_i)$. Then, throughout the entire manuscript we always tacitly assume that $\bigcap_{n \ge 1}\bigcup_{m \ge n} W_m = \R^d$ and that $\inf_n \ms{asp}(W_n) > 0$.
 Our aim is to treat Betti numbers of Gaussian excursions, such as number of connected components, number of cavities, etc\dots We introduce a more general class of topological functionals adapted to Morse excursions.

We call
$
 \mathscr E^k$ the class of $ k$-dimensional compact manifolds $ A$ of $ \R^d$ (with boundary) that can be written as the level set $A =  \{f\ge u\}$ of a $ \mc C^k$ Morse function $ f:\R^d\to \R$, such that for two critical points $ x,y$ of $ f$, $ f(x)\ne f(y)$. The latter requirement could be avoided, but it is not restrictive for Gaussian fields and more convenient this way. Such a representation of $ A$ is called a \emph{Morse representation}.

 \begin{definition}
 \label{def:topo-func}
 Say that a function $ \b : \mathscr E^k\to \R$ is {\it topologically additive} if it admits the representation on connected components $ C$
\begin{align*}\b(A) =
\sum_{C\in  \mathscr C(A)}\b(C)
\end{align*}
and
 \begin{itemize}
\item $ \b(C) $ only depends on the isotopy class of $ C,$ where two sets $ C,C'$ are in the same isotopy class {if there is a continuous function $ \g :[0,1] \times \R^d\to \R^d,t\in [0,1]$ such that $ \g (0,x) = x$ and $ \g(1,C) = C'$, and for each $ t$, $\g (t,\cdot )$ is a homeomorphism.}
\item  {there is $ \kappa <\infty $} such that for any Morse representation $ C = \{f\ge u\}$,
 $ | \b(C) |  {\le \kappa N_{ f,u}} $, where $ N_{ f,u}$ is  the number of critical points of $ f$ over $ C$ above level $u$.
 \end{itemize}
 \end{definition}

	 Our main example will be the Betti numbers. The precise definition relies on the concept of homology, whose mathematical definition is beyond the scope of the present work. We refer the reader to \cite{milnor,hatcher}.%

	 \def\b{\beta}
\begin{definition} Betti numbers are additive topological functionals $ \b _k, 0\le k  < d$ where $ \b _k(A)$ counts the number of equivalence classes of $ k$-dimensional cycles of $ A$.  
\end{definition}
 For instance, a $ 0$-dimensional cycle is in the same class as a point, hence $ \b _0(A)$ counts the number of classes of points which are not topologically equivalent in $ A$, hence $ \b _0(A)$ is the number of bounded connected components of $ A.$

	 The first property from Definition \ref{def:topo-func} holds for Betti numbers since they are invariant by homotopy, see \cite[Theorem 2.10]{hatcher}. For the second property, note that if $u \le u'$ are two levels of critical points of $f$ such that there is no other critical point with level in $[u, u']$, then the relative homology groups $H_k\big(\{f \ge u\}, \{f \ge u'\}\big)$ are of rank 1 for at most one value of $k$ and 0 otherwise. Therefore, the second property
	 follows from the long exact sequence in homology. We refer the reader to \cite[Section I.5]{milnor} for details.

We will be looking at here a $ \mc C^{{\blue l_0 + 2}}$ Gaussian field $ F$ which level sets are not compact, but we only observe the connected components in the interior of a window $ W_n $, so that we have indeed
\begin{align*} \bigcup _{C\in \mathscr C(\E(u),W_n ,\partial W_n )}C = \{f\ge u\},
\end{align*}
 for some $ \mc C ^{{\blue l_0 + 2}}$ function $ f$ on $ W_n$ which coincides with $ F$ on a $ \e $-neighbourhood of the components for $ \e >0$ sufficiently small. It is well known that  excursions of smooth Gaussian fields   {of the form of Assumption  \ref{ass:gaussian-intro} are almost surely Morse, i.e. they a.s. do not have degenerate critical points (see  \cite{AT07}).}
We set
\begin{align}
\label{eq:b_n}\b _{n}(u) : = \b _n(u;F) :=\sum_{ C\in \mathscr  C(\E(u),W_n,\partial W_n)} \b(C).
\end{align}
Hence, we consider topologically additive functionals as defined above.

%
%
\def\b{\beta}
\def\qm{q_{\ms M}}
%
%

 \subsection{Fixed- and multi-level CLT and variance lower bounds }
 
\label{sec:main-results}
 \label{ss:sec}
 
We first state the  CLTs for  {\blue topological functionals}, then the FCLT in the variable $ u$, where we also assume that $ \#\{i: Q_{i}\neq \emptyset \}\sim \text{\rm Vol}(W_n)\sim n$  {where $ Q_{ i} =( i + [0,1)^{ d})\cap W_{ n},i\in \Z ^{ d}$}. {\blue Let $|A|$ denote the Lebesgue measure of a Borel set $A \subseteq \R^d$.}
 In this section, we state a CLT for {\blue topological functionals} at some fixed levels.     To ensure positivity of the limiting variance, we need an additional condition. To state this precisely, let   
 \begin{align}
	\label{eq:mue}
	\mu (u) :=\lim_{n\to \infty }  |W_n|^{-1}\mathbf E\big(\b_n(u)\big),
\end{align}
supposing that this limit exists. {\blue For example, this limit exists and is postive in the case of Betti numbers, see Remark \ref{rem:bet} below}.
Henceforth, we set $\g := \eta/d - 1/2$, which satisfies $ \gamma >{254}d$ under Assumption \ref{ass:gaussian-intro}. Define   
\begin{align*}
\tilde \beta _n^{}(u)=n^{-1/2}(\beta _n^{}(u)-\mathbf{E}(\beta _n^{}(u))).
\end{align*}
%
%
\bet[Multivariate CLT]
\label{thm:fix_clt}
Consider a Gaussian random field satisfying Assumption \ref{ass:gaussian-intro}.
  Furthermore, let {\blue $u_1, \dots, u_K\in I$ where $I$ is} as in Assumption \ref{ass:perco}.
Then,
 $$\{\wt{\b_n }(u_i)\}_{i \le K}\Rightarrow \mc N(0, \Sigma),$$
 where $\mc N(0, \Sigma)$ is a normal distribution with mean 0 and some covariance matrix $\Sigma$.  {If $\mu(u)$ exists and is not 0 for every $u \in \R$,   then the limiting variance is strictly positive, i.e., $\Sigma_{ii} > 0$.}
\ent

\begin{remark}
 {Our bound for $ l_0$ is likely  very conservative. We believe it can be improved substantially by optimizing further within proofs.}
\end{remark}

Theorem \ref{thm:fix_clt} is proven in Section \ref{sec:fix_clt} by invoking a general CLT for stabilizing functionals on Gaussian random fields from \cite[Theorem 1.2]{bel}. The latter result is modeled after a classical CLT for stabilizing functionals of a Poisson point process \cite[Theorem 3.1]{yukCLT}. Very recently, there also has been a general CLT implying the asymptotic normality of the volume, surface area and Euler characteristic of an unbounded component in the excursion set \cite{maca}.

Concerning the positivity of variance, we then proceed along the lines of \cite[Theorem 1.3]{bel}. However, a crucial ingredient in that proof is \cite[Lemma 3.13(1)]{bel}, which contains a delicate argument based on properties of the spectral measure to show that with positive probability at least some connected component of the excursion set intersects $[0, 1]^d$.

%
%
\subsection{Functional central limit theorem}
\label{ss:fclt}
After having established the CLT at   { fixed levels} in Theorem \ref{thm:fix_clt}, the next step is to prove a CLT where the functional is considered as a stochastic process in the level. To this end, we must ensure that the process $ u\mapsto \beta _{\blue n}(u)$ is in the CADLAG space:
\begin{lemma} {\blue Let $u_- \le u_+$ be real numbers such that there are no critical points in $W_n$ where the value of the field lies between $u_-$ and $u_+$. Then,} $ \beta _{\blue n}(u_-) = \beta _{\blue n}(u_+).$

\end{lemma}

Since, by Lemma \ref{lm:KR} below, the critical values over a compact window are almost surely locally finite, the process $ \beta _{\blue n}$ a.s.~jumps finitely many times on each compact. Therefore, we henceforth consider $\beta_n(u)$ as an element in the space of CADLAG functions equipped with the standard Skorokhod topology, see \cite[Section 12]{billingsley}. {Recall that $\tilde \beta _n^{}(u)=n^{-1/2}(\beta _n^{}(u)-\mathbf{E}(\beta _n^{}(u))).$}

\begin{theorem}[FCLT for  Betti numbers]
	\label{thm:fclt}
Consider a Gaussian random field satisfying Assumption \ref{ass:gaussian-intro}.	Let $I$ be an interval satisfying Assumption \ref{ass:perco}. Then, as $n \to \ff$, as a process, in the Skorokhod topology,
$${\wt{\b_n }(\cdot)} \Rightarrow Z,$$
where $Z$ is a centered Gaussian process  on the interval $I$.
\end{theorem}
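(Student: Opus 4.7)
The plan is to decompose the proof into two standard steps: convergence of finite-dimensional distributions, and tightness in $D(I)$ equipped with the Skorokhod topology.

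For the finite-dimensional distributions I would apply Theorem \ref{thm:fix_clt} directly: for any finite collection $u_1 < \cdots < u_K$ in $I$, Assumption \ref{ass:perco} is satisfied and the multivariate CLT gives a centered Gaussian limit for $(\wt\beta_n(u_1),\dots,\wt\beta_n(u_K))$. The resulting family of limit covariance matrices is consistent over finite subsets of $I$ and therefore defines a centered Gaussian process $Z$ on $I$.

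The main work is tightness. The starting point is the observation that Betti numbers of $\mc C(\E(u), W_n, \partial W_n)$ can only change at critical values of $F$ on $W_n$, and that each critical point alters $\beta_n$ by at most $1$ (up to lower-order boundary effects from components hitting $\partial W_n$); consequently
\[
|\beta_n(v) - \beta_n(u)| \;\leq\; N_n(u,v) + B_n(u,v),
\]
where $N_n(u,v)$ is the number of critical points of $F$ in $W_n$ with level in $[u,v]$ and $B_n(u,v)$ is the boundary correction. The count $N_n$ is additive in the level interval, which replaces the non-monotone Betti process by a monotone quantity. Combining Theorem \ref{thm:gass} with the Kac--Rice formula and the covariance decay in Assumption \ref{ass:gaussian-intro}, together with the white-noise decomposition of Section \ref{sec:gaus} to handle decorrelation across the window, I would derive a uniform-in-$n$ fourth-moment bound on increments of the form
\[
\mathbf{E}\bigl[(\wt\beta_n(v) - \wt\beta_n(u))^4\bigr] \;\leq\; C(v-u)^2
\]
on any compact subinterval of $I$. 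Cauchy--Schwarz on three consecutive levels $u<v<w$ then yields
\[
\mathbf{E}\bigl[(\wt\beta_n(v)-\wt\beta_n(u))^2 (\wt\beta_n(w)-\wt\beta_n(v))^2\bigr] \;\leq\; C(w-u)^2,
\]
which is Billingsley's classical moment criterion for tightness of CADLAG processes in the Skorokhod topology.

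The principal obstacle is establishing the uniform fourth-moment bound on increments, since the $n^{-1/2}$ normalization only naturally controls the variance. To obtain the matching fourth moment I would approximate $\beta_n(v)-\beta_n(u)$ by a sum of $O(n)$ essentially independent local scores constructed via the stabilization framework of \cite{bel}, and then invoke a Rosenthal-type inequality to pass from second-moment control of local increments (order $v-u$, from Kac--Rice) to fourth-moment control of the sum (order $(v-u)^2$). Crucially, Assumption \ref{ass:perco} forces $I$ to lie outside the critical regime, so the exponential tails on bounded-component diameters mentioned in the introduction apply: each critical point's contribution to the Betti number of its component is localized with rapidly decaying tails, and the resulting stabilization radii have high enough moments (guaranteed by $q_0 > 2^{13}$) to legitimize the truncation and Rosenthal step. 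The boundary term $B_n(u,v)$ is of lower volume order and is absorbed by the same argument.
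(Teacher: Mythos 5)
Your high-level architecture (finite-dimensional convergence from Theorem \ref{thm:fix_clt}, plus a monotone dominating quantity and a fourth-moment criterion for tightness) is in the spirit of the paper, but the central estimate you propose cannot hold, and the step that the paper uses to get around this is missing. The uniform bound $\mathbf E\big[(\wt{\b_n}(v)-\wt{\b_n}(u))^4\big] \le C(v-u)^2$ fails for short intervals: the increment $\b_n(v)-\b_n(u)$ is driven by critical values, of which there are on average of order $n(v-u)$ in $W_n$, so for $(v-u)\ll n^{-1}$ the centered, normalized increment behaves like $n^{-1/2}$ times a Poisson-type variable with mean $\asymp n(v-u)$, giving a fourth moment of order $n^{-2}\cdot n(v-u)=(v-u)/n$, which exceeds $C(v-u)^2$ whenever $(v-u)\lesssim 1/n$. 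The same diagonal term reappears in your Rosenthal step: with $O(n)$ essentially independent local scores whose second and fourth moments are both of order $(v-u)$, Rosenthal gives $n(v-u)+n^2(v-u)^2$, i.e.\ after normalization $(v-u)/n+(v-u)^2$, not $(v-u)^2$; so the Cauchy--Schwarz passage to Billingsley's two-increment condition starts from a false premise.

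This is precisely where the paper inserts additional structure. It writes $\b_n=\b_n^+-\b_n^-$ with both parts monotone in the level (Proposition \ref{prop:decompos-beta-n}), and then applies the reduction of \cite[Corollary 2]{davydov}, which for monotone summands allows the Chentsov-type bound $\mathbf E[\tbnp(I)^4]\le c|W_n|^2|I|^{5/4}$ to be verified only for $n$-big intervals $|I|\ge |W_n|^{-2/3}$, the $n$-small intervals being disposed of by the expectation bound $\mathbf E[\btt(I)]\in o\big(\sqrt{|W_n|}\big)$. For big intervals the moment bound is obtained not from an independence argument but from the cumulant expansion $\mathbf E[\tbnp(I)^4]=3\Var(\tbnp(I))^2+c_4(\tbnp(I))$, the variance being controlled via the martingale-difference resampling scheme together with the level-refined probability bound of Lemma \ref{lm:proba-topo} (the $\min(\,\cdot\,,|I|)$ term is what produces the $|I|$-dependence), and the fourth cumulant via spatial decorrelation (Lemma \ref{dec_lem}) and the cluster decomposition of cumulants. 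Without the monotone decomposition plus a Davydov-type reduction (or some substitute treating intervals of length $\lesssim 1/n$), your tightness argument has a genuine gap. Two smaller inaccuracies: a single critical point can change $\b_n$ by more than $1$ (e.g.\ when an internal component merges with an external one the jump is $-\beta(C)$, bounded only by the number of critical points of the cluster), and Assumption \ref{ass:perco} guarantees polynomial decay of arbitrary order for cluster diameters, not exponential tails.
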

{\blue Note that in general, we are not aware of more precise distributional results of the limiting process $Z$. However, in the one-dimensional case, the Betti number corresponds to the number of level crossings, which has been studied intensely in the literature \cite{AW}.
}

%
%
\subsection{Discussion of the percolation assumption}
\label{ss:perc}

This question is highly technical and outside the scope of the present paper,
many recent works deal with the percolation properties of Gaussian excursions, and in particular give conditions under which the percolation regime undergoes a {\it sharp phase transition}.
We briefly discuss what is known about the validity of Assumption \ref{ass:perco} on the exponential decay of the cluster diameters.
A more elementary question concerns the existence of a level above which with probability 1, there exists an unbounded connected component. That is, we define
$$u_c:= \sup\Big\{u \in \R: \mathbf P\big(\text{\rm diam}(\mc C(\E(u);[0, 1]^d)) = \ff\big) > 0\Big\}$$
as the standard critical level of percolation. 
The fact that for sufficiently low $ u$ there is an unbounded connected component under mild 	assumptions goes back to Stepanov and Molchanov \cite{SteMol}.
 Here are some more precise statements. We mainly give results in the supercritical regime, as it is in general easier to give bounds on the size of bounded components of the subcritical regime.
\been
\im Consider the planar case, i.e., $d=2$. Here, $u_c = u_c^s = 0$ by self-duality under very mild assumptions. Then, \cite[Theorem 1.7]{muir2} shows the sharpness of the phase transition under some correlation decay assumptions. In particular, Assumption \ref{ass:perco} is implied by Assumption \ref{ass:gaussian-intro} in the planar case, meaning it holds for every interval $ I$ not containing $ \{0\}$.

\im For general dimensions $d \ge 3$, the situation is more delicate. First, the typical situation is to have percolation of both phases at level $ 0$, hence $ u_c>0$; it has been proved to hold in \cite{DRRV} under stronger assumptions, and in general the value of $ u_c$ is not known, as in most non-symmetric percolation models. \cite[Theorem 1.2]{sev} shows the sharpness of the phase transition under mild hypotheses, meaning that Assumption \ref{ass:perco} holds under Assumption \ref{ass:gaussian-intro} for $ I\su [-u_c,u_c]^{c}$. However, the arguments need positive association (a.k.a. the FKG inequality) and therefore only apply for nonnegative covariance kernels. That is, we must additionally assume $\inf_x \C(x)\ge 0$. {\blue We also assume that $q$ is invariant under permutations and sign changes of the coordinate axes.}
\im {\blue Let $ B(x,R)$ be the ball centred in $ x \in \R^d$ with radius $ R > 0$. Let $E_u^{(x, R)}$ denote  the event that there exists a bounded excursion component intersecting both $B(x,R)$ and $\partial B(x,2R)$. }
Without positive association, for the components in the subcritical regime, we can apply \cite[Theorem 3.7]{Mui-sprinkled}: if the covariance decays monotonically at a rate faster than any polynomial, then we have the  threshold
\begin{align*}
	{\blue u_c' =\inf \big\{u\in \R:\liminf _{R\to \infty }\sup_{x\in \R^d}\mathbf P\big(E_u^{(x, R)}\big) = 0\big\}}
\end{align*}
that satisfies    ${\blue u_c'}\le u^s_c<\infty $, and for $ u>u^s_c,$
\begin{align*}
	\limsup_{r \to\infty}(\log r)^{-1} \log \mathbf P(\textrm{ diam}(\mc C(\{F\ge u\},\{x\}))>r) = - \infty.
\end{align*}
 It is expected that $ u_c^s= {\blue u_c'}$, but proved only in the planar case.
\im Finally, \cite[Theorem 1.2]{ms2} concerns again the sharpness of the phase transition for fields not satisfying the FKG condition in dimension $ d\ge 3$. The arguments here rely on a suitable finite-range decomposition of the considered random field. In particular, it is proved  that exponential decay occurs for some (non necessarily positive) covariances decaying polynomially with an arbitrary negative exponent.

\enen

To summarise:
\begin{theorem}[Muirhead, Rivera, Severo]Assume Assumption \ref{ass:gaussian-intro} holds. Then, Assumption \ref{ass:perco} holds in the following cases:\begin{itemize}
\item $ d = 2$ and $ 0\notin  \text{\rm{\color{black} closure}}(I).$  
\item $ d \ge 3$ and $ \C(x)\ge 0$ and $ I\su (-\infty ,-u_c^s)\cup (u_c^s,\infty )$. {\blue We also assume that $q$ is invariant under permutations and sign changes of the coordinate axes. }
\item $ d \ge 3$ and $ I\su (-\infty ,-u_c^s)$ for some $ u_c^s>u_c$ if $ \C(x)$ decays sufficiently fast.
\item $ d \ge 3$ and $ I\su (u_c^s,\infty )$ for some classes of covariances of ``finite range''
\end{itemize}
 \end{theorem}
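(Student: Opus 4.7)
My plan is to treat the theorem as a compilation of four known sharp-phase-transition results from the Gaussian continuum-percolation literature, and to dispatch to the appropriate reference in each of the four cases. The common preliminary step is to check that Assumption \ref{ass:gaussian-intro} supplies the mild regularity and decay hypotheses that appear in each of the cited papers: $\mathcal C^{q_0+1}$ sample paths, polynomial covariance decay at rate $\eta > 55d^{2}$ (which is stronger than anything the quoted results demand), and a non-degenerate spectral mass via $\int q \neq 0$. Once that translation is done, the four bullets are essentially parallel.

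For the planar case I would invoke \cite[Theorem 1.7]{muir2}: self-duality gives $u_c = u_c^s = 0$, and the sharpness statement yields exponential cluster-diameter decay for every $|u| > 0$, which is far stronger than the polynomial tail required by Assumption \ref{ass:perco} on any interval $I$ with $0 \notin \bar I$. For $d \ge 3$ with $C \ge 0$, nonnegativity of the covariance gives FKG for $F$, which, together with Assumption \ref{ass:gaussian-intro}, matches the hypotheses of \cite[Theorem 1.2]{sev}, producing Assumption \ref{ass:perco} on any $I \subset (-\infty, -u_c^s) \cup (u_c^s, \infty)$.

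The remaining two bullets handle the case when positive association is not available. In the subcritical regime I would add the hypothesis of monotone super-polynomial decay of $C$ and invoke \cite[Theorem 3.7]{Mui-sprinkled}, which constructs the threshold $u_c^s$, shows $u_c \le u_c^s < \infty$, and furnishes super-polynomial tails for $\mathrm{diam}(\mathcal{C}(\{F\ge u\},\{x\}))$ uniformly in $x$ when $u > u_c^s$; passing to $-F$ then covers $I \subset (-\infty, -u_c^s)$. In the supercritical finite-range case I would invoke \cite[Theorem 1.2]{ms2}, whose proof goes through the finite-range decomposition and whose conclusion yields the same type of exponential tail on $I \subset (u_c^s, \infty)$.

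The main obstacle I anticipate is bookkeeping rather than mathematics: each cited theorem is stated under slightly different regularity and covariance-decay hypotheses, and I would need to verify once and for all that the polynomial-decay bound $\eta > 55 d^{2}$ in Assumption \ref{ass:gaussian-intro} dominates each of them. The only bullet where Assumption \ref{ass:gaussian-intro} is genuinely insufficient on its own is the third, which is why the statement itself carries the qualifier \emph{if $C(x)$ decays sufficiently fast}; there one really needs the super-polynomial monotone decay demanded by \cite{Mui-sprinkled}. Since no new probabilistic content is introduced beyond what the references prove, I expect the final written proof to remain a short reference-dispatch rather than a standalone argument.
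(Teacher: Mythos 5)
Your proposal matches the paper's own justification: the theorem is proved exactly as a reference dispatch, with the planar bullet resting on \cite[Theorem 1.7]{muir2}, the nonnegative-covariance bullet on \cite[Theorem 1.2]{sev}, the ``fast decay'' bullet on \cite[Theorem 3.7]{Mui-sprinkled} (which is precisely why that bullet carries an extra decay hypothesis beyond Assumption \ref{ass:gaussian-intro}), and the finite-range bullet on \cite[Theorem 1.2]{ms2}, after checking that the regularity and decay in Assumption \ref{ass:gaussian-intro} dominate the hypotheses of each cited result. This is the same approach as the paper, so no further comparison is needed.
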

{\blue We note that the final point probably also works for $ I\su (-\infty ,-u_c^s)$. The Bargmann-Fock and Mat\'ern examples from Section \ref{sec:gaus} below satisfy the case $ d = 2$. They also satisfy the case $ d\geqslant 3$, up to taking $ \nu $ sufficiently large in  the Mat\'ern family, as the decay can be made arbitrarily high. }

 \section{Properties of Gaussian fields}
 \label{sec:gaus}
 \subsection{White noise convolution}

Many assumptions are more conveniently stated through the spectral measure, defined as the unique probability measure $ \mu $ on $ \R^d$ such that
\begin{align*}\C(x) = \int e^{ixu}\mu (du), \qquad x\in \R^d.
\end{align*}

Let $\mc B(\R^d)$ denote the Borel $\sigma$-algebra on $\R^d$.
A Gaussian white noise is a random signed measure seen as a random field $\W :\mc B(\R^d)\to \R$ such that \begin{itemize}
\item $\W(A)\sim \mc N(0, | A | )$ for $A$ Borel bounded, where $  | A | $ is the Lebesgue measure of $ A$
\item $\W(A\cup B)= \W(A)+\W(B)$ a.s.~for every disjoint bounded Borel sets $A,B \su \R^d$
\item $\W(A)$ and $\W(B)$ are independent for every disjoint bounded Borel sets $A,B\su\R^d.$
\end{itemize}
See \cite[Section 1.4.3]{AT07} for an explicit construction.
It satisfies in particular for $f,g$ square integrable
\begin{align}
\label{eq:cov-WN}
\Cov \left(\int fd \W ,\int gd \W  \right)=\int fg.
\end{align}
Commenting on Assumption  \ref{ass:gaussian-intro}, it suffices to assume that $ \mu $ has a  smooth $ L^2$ density, denoted by $ \rho $, to ensure  $ \C= \hat \rho = q \star q$ with $ q= \widehat {\sqrt{\rho }}$.
Let $\W $ be a centred stationary Gaussian white noise on $\R^d$. Writing $\star$ for the classical convolution operator,  $ F$ admits the representation
\begin{align}
\label{eq:convol-repr}
F(x)\equlaw F(x;\W ):=q\star \W(x)
\end{align}
because
\begin{align*}
\C(x)=q \star { q}(x) =\int q(y)q(y+x)\d y =  \mathbf E(F(0)F(x))
\end{align*}
and the covariance uniquely determines the law of the Gaussian field.

 Not all Gaussian covariances can be written in this way, for instance the random planar wave model cannot as its spectral measure is singular  \cite[Section 2.1]{SurveyBeliaev}.
 {We sometimes work under the renormalisation assumption}
\begin{align*}
\Var(F(x))=\|q\|_{L^2}^{ 2}=1.
\end{align*}

\subsection{Non-degeneracy}
 \label{sec:ass}

  Let us recall the formulae linking derivatives of the field and the covariances \cite[(5.5.4)-(5.5.5)]{AT07}: if a covariance function $\C$ is $\mc C^{2k+1}$, $F$ is a.s. of class $\mc C^{k}$ and for natural integers $\a ,\eta ,\g ,\delta $ such that $\a +\eta \le k,\g +\delta \le k$,
coordinates $ 1\le i,j\le d,$
\begin{align}
\label{eq:deriv-cov}
\mathbf E\left(
\partial _i^{\a }
\partial _j^{\eta }F (t)\cdot \partial _i^{\g  }\partial _j^{\delta }F (s)\right)=
	\frac{ \partial^{\a +\eta +\g  +\delta } }{\partial {t_i}^{\a } \partial {t_j}^{\eta } \partial {s_i}^{\g  } \partial {s_j}^{\delta }}C (t-s),s,t\in \R^{\blue d}.
\end{align}
For instance, by symmetry,  {for some $ 0<\lambda _-<\lambda _{  + },$}
{for $ k\geqslant 2,$
\begin{align}\notag
\nabla C(0)=& 0\\
\label{eq:Cov-Hess}  \C(0)-\lambda _ + \|t\|^2 \le  \C(t)&\le \C(0)-\lambda _{  - }\|t\|^2.
\end{align}
Denote throughout all the paper $ \Hess _{ F}(x)$ the Hessian matrix of some $ \mathcal{C}^{ 2}$ field $ F$ at some point $ x.$
 In fact, $ \lambda _-,\lambda _{  + }$ are related to the (negative) eigenvalues of $\Hess_{ \C}(0)$, or equivalently to the maximum and minimum of  $  \textrm{Var}\left( \nabla F (0)\cdot u\right)$  over $ u\in  \mathbb  S  ^{ d-1}$.}

The following standard result states that the field's derivatives are not degenerate at disjoint locations under Assumption \ref{ass:gaussian-intro}{, it will be applied later on to the set of orderer multi-indexes of length $ \le 3$ in $ \{1,2\}.$} $$  \mc I = \{(1),(2),(1,1),(2,2),(1,2),(1,1,1),(1,1,2),(1,2,2),(2,2,2)\}.$$ 
\begin{proposition}
\label{prop:ND-strong}  {Let $  \mc I$ be any finite set of ordered multi-indices.}
Assume there is an open set in the support of the spectral density $ \rho $.
Let the Gaussian vector $ V(x) = (\partial _{\a }F(x);\a \in \mc I)\su \R^{ \#\mc I },x\in \R^d$ and for $ x,y\in \R^d$, $ V(x,y)\in \R^{2\# \mc I}$ the random vector obtained by concatenating $ V(x)$ and $ V(y).$
 Then
for all $ x\in \R^d$, the derivatives $ \partial ^{\a }F(x),\a \in \mc I$ form a non-degenerate Gaussian vector, i.e. $ \det( \Cov (V(x))>0$.
Also, for $ \delta >0,$
\begin{align*}
\inf_{ | x-y | >\delta } | \det( \Cov \left(V(x,y)\right)) | >0.
\end{align*}
\end{proposition}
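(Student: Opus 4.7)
The strategy rests on the white-noise / spectral representation of $F$: writing $F(x) = \int e^{i\lan x, \xi\ran}\, dZ(\xi)$ for the complex Gaussian spectral measure $Z$ with intensity $\rho$, derivatives become multiplication by monomials, so that $\partial^\alpha F(x) = \int (i\xi)^\alpha e^{i\lan x, \xi\ran}\, dZ(\xi)$. Linear combinations of the $\partial^\alpha F$ at one or two points thus correspond to spectral integrands of the form $P_1(i\xi)e^{i\lan x, \xi\ran} + P_2(i\xi)e^{i\lan y, \xi\ran}$, with $P_1, P_2$ polynomials in $\xi$ whose monomials are indexed by $\mc I$, and the variance of the linear combination equals the squared $L^2(\rho)$-norm of this integrand. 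Hence degeneracy translates into the pointwise vanishing of such an integrand on $\supp(\rho)$.

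For the single-point claim, suppose a nontrivial linear combination $\sum_{\alpha \in \mc I} c_\alpha \partial^\alpha F(x)$ vanishes almost surely. Then the polynomial $P(i\xi) = \sum_\alpha c_\alpha (i\xi)^\alpha$ vanishes on $\supp(\rho)$, and in particular on the non-empty open subset contained therein, forcing $P \equiv 0$ and hence all $c_\alpha = 0$. By stationarity $\det\Cov(V(x))$ does not depend on $x$, establishing the first assertion.

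For the two-point claim, fix $z := y - x \neq 0$ and suppose $\sum_\alpha c_\alpha \partial^\alpha F(x) + \sum_\alpha d_\alpha \partial^\alpha F(y) = 0$ almost surely, with associated polynomials $P_1, P_2$. Then $P_1(i\xi) + P_2(i\xi) e^{i\lan z, \xi\ran}$ vanishes on the open subset of $\supp(\rho)$, and by real-analyticity on all of $\R^d$. The crux of the argument is then deducing $P_1 \equiv P_2 \equiv 0$ from this identity: restrict to a line $\xi = tv$ with $t \in \R$ and a direction $v$ satisfying $\lan z, v\ran \neq 0$; if the polynomial $t \mapsto P_2(itv)$ were not identically zero, then outside its finite zero set the bounded, non-constant function $t \mapsto e^{it\lan z, v\ran}$ would agree with the rational function $-P_1(itv)/P_2(itv)$ on a non-empty interval, which is impossible. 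Hence $P_2(itv) \equiv 0$ in $t$, and consequently $P_1(itv) \equiv 0$ as well. As $v$ ranges over $\R^d \setminus z^\perp$ the corresponding lines cover the open dense set $\R^d \setminus z^\perp$, on which $P_1$ and $P_2$ therefore vanish; being polynomials, they vanish everywhere.

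Finally, for the uniform lower bound, observe that by stationarity $g(z) := \det\Cov(V(0, z))$ depends only on $z$, is continuous on $\R^d$ by smoothness of $C$, and is strictly positive for $z \neq 0$ by the previous step. Under Assumption \ref{ass:gaussian-intro}, the off-diagonal blocks of $\Cov(V(0, z))$ decay as $|z| \to \infty$, so $\Cov(V(0, z))$ tends to the block-diagonal matrix with both diagonal blocks equal to $\Cov(V(0))$, yielding $g(z) \to \det(\Cov(V(0)))^2 > 0$. Combining this asymptotic bound with continuity and compactness of the annulus $\{\delta \le |z| \le R\}$ for sufficiently large $R$ then gives $\inf_{|z| > \delta} g(z) > 0$, completing the proof.
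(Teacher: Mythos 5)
Your proposal is correct and takes essentially the same route as the paper: degeneracy is translated through the spectral representation into the vanishing of $P_1(i\xi)+P_2(i\xi)e^{i\langle z,\xi\rangle}$ on the open set contained in $\supp\rho$, analytic continuation then forces the polynomials to vanish identically, and the uniform infimum over $|x-y|>\delta$ follows from continuity and positivity on compact annuli combined with the decay of the covariance derivatives at infinity. The only deviations are cosmetic — your restriction-to-lines step (where the phrase ``which is impossible'' deserves one more line, e.g.\ that $t\mapsto e^{ict}$ with $c\ne 0$ cannot agree with a rational function on an interval since analytic continuation would force equality on the real line while the rational function converges at infinity and the exponential does not) replaces the paper's brief ``finite expansion'' observation, and your direct block-diagonal limit of $\Cov(V(0,z))$ as $|z|\to\infty$ replaces the paper's eigenvector/compactness contradiction argument.
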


 {For a $ \mathcal{C}^{ l}$ function $ G$ and $ 0\leqslant m\leqslant l$, denote by $ \partial ^{ \alpha }G, | \alpha  |  = m$ the derivatives for ordered multi-indices $ \alpha $ with $  | \alpha  |  = m$ to avoid repetitions of the same partial derivatives.}

 \begin{proof}
 The proof is based on the fact that for a Gaussian vector $ V = (V_1,\dots ,V_{m})$, $ \det(\Cov (V)) = 0$ iff there is a non-trivial linear relation
\begin{align*}\sum_{i = 1}^ma _iV_i = 0\textrm{ a.s.. }
\end{align*}

Let $(a _{\a })_{\a\in I },(b _{\a' })_{\a'\in I }$ finite collections of complex numbers indexed by multi-indices, and let $ x\in \R^d$. By recalling that $ \C = \hat \rho $, we have by
  \eqref{eq:deriv-cov}, for some polynomials $ P_1,P_2,Q: \mathbb C ^d\to \mathbb C ,$ for $ x\in \R^d \setminus \{0\}$,
\begin{align*} \textrm{Var}\left(
\sum_{\a }a_{\a }\partial ^{\a }F(0) + \sum_{\a' }b_{\a' }\partial ^{\a' }F(x)
\right) = \int_{ \mathbb C^d}[\underbrace{P_1(\lambda ) + P_2(\lambda ) + e^{i\lambda x}Q(\lambda )}_{R(\lambda )}]\rho (\lambda )d\lambda ,
\end{align*}
where resp. $ P_1,P_2$ are obtained when $ (b_{\a' })\equiv 0,$ resp. $ (a_{\a })\equiv 0$, and $ Q$ is the  {\it cross term}. This formula is valid for any $ L^2$ spectral density $ \rho $ and corresponding stationary Gaussian field $ F$, hence the right hand side is nonnegative for any $ \rho $. It implies that $ R(\lambda )\in \R_{ + }$, and similarly $ P_1,P_2$ are nonnegative.

	 Assume now that for some $ \rho $ having a nonempty open set $ O$ in its support, this quantity vanishes, which equivalently means that there exist deterministic complex $(a_\a)$, $(b_{\a'})$ such that almost surely,
	 $$ \sum_{\a }a_{\a }F(0) + \sum_{\a' }b_{\a' }\partial ^{\a' }F(x) = 0.$$
  It means that $ R(\lambda ) = 0$ over $ O$, and as an analytic function, it means it vanishes on $ \mathbb C ^d$. Hence, $ e^{i\lambda x}Q(\lambda )$ should have a finite expansion with $ x\ne 0$, which means that $ Q = 0$, and then $ P_1 + P_2 = 0$ as well. Since they are nonnegative, all coefficients of $ P_1,P_2$ are zero, which easily implies with \eqref{eq:deriv-cov} that the coefficients $ a_{\a }$ and $ b_{\a' }$ are all $ 0$.
 We hence proved by contradiction that $ \det( \Cov (V(x,y)))>0$ for $ x\ne y$. Looking at the first $ | \mc I | $ coordinates of $ V(x,y)$, it means that $ V(x)$ is non-degenerate, which proves the first statement.

 For the second statement, note that the map $(x,y)\mapsto \det( \Cov (V(x,y)))$ is continuous and does not vanish, hence for $0<\delta < K<\infty $,
\begin{align*}\inf_{ \delta <| x-y | <K} | \det(\Cov(V(x,y))) | >0.
\end{align*}
Let us finally prove by contradiction that the infimum over distant $x,y $ is non-zero as well. If it is zero, it means by stationarity that for some sequence $ x_n\to \infty ,$
\begin{align*}\det \underbrace{\Cov (V(0,x_n))
 }_{ = :\Gamma _{x_n}}\xrightarrow[x_n\to \infty ]{}\;0.
\end{align*}
Denoting by $ U_{x_n}$ a unit vector associated to the smallest eigenvalue of $ \Gamma _{x_n}$, we have $ \Gamma _{x_n}U_{x_n}\to 0$ in $ \R^d$.
By compactness of $ \mathbb S ^{d-1}$, it means we can find coefficients $ a_{\a },b_{\a' }$ which constitute the limit of a subsequence of $ U_{x_n}$ in $ \mathbb S ^{d-1}$ and such that
\begin{align*}\sum_{\a }a_{\a }\partial ^{\a }F(0) + \sum_{\a' }b_{\a' }\partial ^{\a' }F(x_n)\to 0.
\end{align*}
Since by stationarity both terms of the left hand side have a constant positive variance, it means their correlation goes to $ 1.$
This is in contradiction with  Assumption  \ref{ass:gaussian-intro}, which implies that for each $ \a ,\a' \in  \mc I,$ we have  $ \partial ^{\a  }\partial ^{\a' }\C(x_n)\to 0$ as $ x_n\to \infty .$

\end{proof}

\subsection{Concentration}

Finally, it is a standard  result in Gaussian processes that the field and its derivatives concentrate well.  {Define for $ A\su \R^{ d}$ and any $ G:A\to \R^{ q}$ for some $ q\geqslant 1$
\begin{align*}
 \|G\|_A = \sup _{ x\in A}\|G(x) \|.
\end{align*}}

\begin{proposition} \label{prop:concentr}  {  Let $G$ be a Gaussian field over some compact $A\su  \R^{ d}, \sigma _A^2: =  \sup_{ x\in A} \textrm{Var}\left(G(x)\right)$ and $ \sigma ^2\geqslant \sigma _A^2 $. Assume  for some $ \lambda <\infty $
\begin{align}
\label{eq:scale-GFs-concentration}
 \sigma ^{ -2}\mathbf E [\|G(x)-G(y)\|^2]\le   \lambda  ^2\|x-y\|^2, \qquad x,y\in A.
\end{align}
 There is  $ c >0$ depending only on $d, \lambda  $ such that for  $ t\geqslant 0$
\begin{align}
\label{eq:altern-concentr}\mathbf P(\|G\|_A\ge t)\le  c  (1 +  \mathsf{ diam }(A)^{  {d}})\exp(-t^2/2\sigma^2).
\end{align}}

\end{proposition}

{
\begin{proof}It suffices consider the case $ \sigma  = 1$, which we assume in the proof. Also, covering $ A$ with subsets $ A_{ i},i = 1,\dots ,N$ with diameter $ 1$, the union bound yields
\begin{align*}
 \mathbf P (\|G\|_{ A}>t)\leqslant \sum_{i}\mathbf P (\|G\|_{ A_{ i}}>t),
\end{align*}
and we can choose the $ A_{ i}$ so that $ N\leqslant \kappa _{ d}( 1 +  { \rm diam}(A))^{ d}$,
so it suffices to write the proof for $ A$ with diameter $ 1.$
  Borell-TIS inequality ( \cite[Thm. 2.1.1]{AT07}) yields for $ t\geqslant 0$
\begin{align*}
 \mathbf P\big( \|G\|_A >\mathbf E( \|G\|_A )+t\big)\le \exp(-t^2/2\sigma _A^2)\le \exp(-t^2/2),
\end{align*}recalling $ \sigma  = 1\geqslant \sigma _{ A}.$
Define the metric $   \mathsf d$ induced by $ G$\begin{align*}
  \mathsf d(x,y) = \sqrt{\mathbf E \|G(x)-G(y)\|^2},x,y\in A.
\end{align*}
 To estimate $ \mathbf E  \|G\|_A $, we use \cite[Th.1.3.3]{AT07}:   for some universal constant $ K,$
\begin{align*}
	\mathbf E (\|G\|_A)\le K\int_0^{  \text{\rm{diam}}(A)/2}\sqrt{\ln(N(\e;A ))}d\e
\end{align*}
  where $N(\e ;A)$ is the minimal number of $   \mathsf d$-balls with radius $ \e $  require to completely cover $ A$.
By Assumption  \eqref{eq:scale-GFs-concentration}, $ B(x,\e )\su B_{   \mathsf d}(x,{\lambda  }\e )$. Basic geometric considerations  yield $ \kappa_{ d} >0$   such that    $ N(\e;A )\le \kappa_{ d}( 1 + \text{\rm{diam}}(A)^{ d}(\lambda  \e) ^{ -d}  )$. Hence, given that $  { \rm diam}(A) \leqslant  1,$ $  \mathbf E \|G\|_A$ has a uniform upper bound depending on $ \lambda $ and $ d.$
\end{proof}
}

 {\begin{remark}
For a fixed Gaussian field $ F$ of class $ \mc C^{ 4}$, by \eqref{eq:Cov-Hess}, there is $ \lambda <\infty $ such that the result above applies to $ F$ and its derivatives on each compact. Throughout the article, we fix some field $ F$ and some $ \lambda $ adapted to $ F$ and its partial derivatives up to order $ 2$.
 \end{remark}}

\section{Topological analysis}
\label{sec:topoa}

\subsection{Morse representation}
Following the theory of Morse functions \cite{milnor}, the central objects of investigation in our work are the random critical points in the compact sampling window $W_n \su \R^d$ of volume $n$. More precisely, for a bounded Borel set $W \su \R^d$, given $ F:W\to \R$ smooth, we let
$$Y(W\times I;F) : = Y(W\times I): = \big\{\big(x, F(x)\big) \in W\times I \co \nabla F(x) = 0\big\}$$
be the marked point process of critical points of the field $F$.

The index $n$ indicates the restriction to some large rectangular window $W_n\su \R^d$
\begin{align*}
Y_n=Y(W_n\times \cdot ).
\end{align*}

Furthermore, let $Q_i$ be the intersection of $ W_n$ with $   \mathsf Q_i = i + [0,1)^d$.
For a window $ W =  \times_{ i = 1}^{ d}[a_i,b_i]$,   {for $ P\su \{1,\dots ,d\}$ with cardinality $ p$, the points of $ W$ whose $ i$-th coordinate is fixed either to  $ a_i$ or to $ b_i$ for  $i\notin P $ is called a facet of dimension $ p$. }
  In this context, call {\it stratified critical point}   any $ x$ belonging to a facet $ \f$ of $ W$ such that
$ F(x) = 0,\nabla _{\f}F(x) = 0$, where $ \nabla _{\f}F(x) = ( \partial _{u_i}F(x))_{ i\in I} $, for some basis $ u_i$ spanning the subspace containing $ \f$. Denote by $ \Ys(W\times I;F) = \Ys(W\times I)$ the corresponding process of stratified critical points, note that it contains $ Y(W\times I)$ (obtained for $ P = \{1,\dots ,d\}$).

The following result ensures that we have finiteness of the moments of the measure $ Y ^{ \partial }$:
\begin{theorem}[Gass, Stecconi \cite{GassSte}, Theorem 1.2]
\label{thm:gass}
 {Let $l\in \mathbb N, G$ a real  Gaussian field  of class $ \mc C^{l+1}$ a.s. on some bounded Borel set $ A\su \R^d$.   Assume that for $ x\in A,$ $ (\partial _{ \a }G(x))_{  | \a  | \le l}$ is a non-degenerate Gaussian vector}. {Then    the number   of critical points of $ G$ over $ A$ has a finite moment of order $ l.$}

 \end{theorem}

 {This result applies to $ F$ thanks to Proposition  \ref{prop:ND-strong} for $ l$ such that $ F$ is of class $ \mc C^{ l + 1}$.}
 The following lemma ensures some sort of topological stability of such a manifold $A = \{f\ge u\}$ perturbed by a $ \mc C^k$ real function $ \Delta:B\to \R $ where $ B$ is a neighbourhood of $ A$. Call {\it quasi critical point}  of  $(f,\Delta )$ at level $ u$ a couple $ (t,x)\in [0,1]\times  \R^d$ such that $(f + t\Delta )(x) = u, \nabla (f + t\Delta )(x) = 0$. In a stratified window $ W$, call more generally {\it stratified quasi critical point} a couple $( t,x)$ with $ x\in \f$ for some facet $ \f$ of $ W$ such that $ (f + t\Delta )(x) = u$ and $ \nabla _{\f}(f +t \Delta )(x) = 0.$
We typically consider the components $ \mathscr C(\{F\ge u\},W,\partial W)$ of the excursion set interior to $ W$.

 \begin{lemma}[Fundamental lemma of stratified Morse theory, Lemma B.1 of  \cite{bel}]
 \label{lm:Morse}
 Assume $ (f,\Delta )$ does not have stratified quasi critical point at level $ u$ on some stratified window $ W$. Then there is a one-to-one mapping $ \zeta $ between $ \mathscr C(\{f\ge u\},W,\partial W)$ and $ \mathscr C(\{f + \Delta \ge u\},W,\partial W)$, and for any $ C\in \mathscr C(\{f\ge u\},W,\partial W)$, $ C$ and $ \zeta (C)$ are isotopic.
 \end{lemma}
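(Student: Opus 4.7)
The plan is to interpolate via $g_t := f + t\Delta$, $t \in [0,1]$, and construct a continuous one-parameter family of stratum-preserving self-homeomorphisms $\Phi_t : W \to W$ with $\Phi_0 = \mathrm{id}_W$ and $\Phi_t(\{f \ge u\} \cap W) = \{g_t \ge u\} \cap W$ for every $t$. Setting $\zeta := \Phi_1$ then gives a bijection on interior bounded components: because the flow preserves every facet, in particular $\partial W$ setwise, a bounded component of $\{f \ge u\}$ disjoint from $\partial W$ is mapped to one of the same type, and the path $s \mapsto \Phi_s(C)$ serves as the isotopy demanded in Definition \ref{def:topo-func}.

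\textbf{Building the flow.} The vector field $v_t$ generating $\Phi_t$ is engineered so that $g_t$ is conserved along trajectories, namely $\partial_t g_t + v_t \cdot \nabla g_t = 0$ on a thin slab around $\{g_t = u\}$. On the interior of $W$ the canonical choice is
\begin{equation*}
v_t(x) \;:=\; -\chi\bigl(g_t(x) - u\bigr)\, \Delta(x)\, \frac{\nabla g_t(x)}{\|\nabla g_t(x)\|^{2}},
\end{equation*}
with $\chi$ a smooth cut-off supported near $0$, so that $v_t$ is supported in a thin neighbourhood of $\{g_t = u\}$ and the resulting flow equals the identity elsewhere. The no-quasi-critical-point hypothesis, combined with compactness of $[0,1] \times W$, yields a uniform strictly positive lower bound on $\|\nabla g_t(x)\|$ on this slab, so $v_t$ is bounded and Lipschitz in $x$ and jointly continuous in $(t,x)$; standard ODE theory gives a global flow on $W$, and the cancellation identity forces $g_t \circ \Phi_t = f$ on the slab, which transfers to the desired equality of excursion sets.

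\textbf{Stratified gluing and main obstacle.} To stay inside $W$ and respect its cellular structure, the construction must be performed stratum by stratum: on each facet $\mathfrak{f}$ of $W$ one uses the intrinsic gradient $\nabla_{\mathfrak{f}} g_t$, which is non-vanishing on $\{g_t = u\} \cap \mathfrak{f}$ by the stratified hypothesis, to define an analogous vector field $v_t^{\mathfrak{f}}$ tangent to $\mathfrak{f}$; these local fields are then combined via a partition of unity subordinate to tubular neighbourhoods of the facets, arranged by induction on codimension so that the global $v_t$ remains tangent to each facet near that facet. This stratified gluing is the main technical obstacle: one must ensure that on the common boundary of adjacent facets the local fields join into a single continuous field tangent to every stratum simultaneously. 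It is essentially the content of the classical stratified Morse theory of Goresky-MacPherson, and the cited Lemma B.1 of \cite{bel} carries it out explicitly in the present rectangular-window setting. Once $\Phi_t$ is in hand, the bijection $\zeta$, the isotopy $s \mapsto \Phi_s(C)$, and the preservation of the ``interior bounded component'' property all follow at once.
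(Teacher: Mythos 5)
The paper does not actually prove this lemma: it is imported verbatim as Lemma B.1 of \cite{bel}, so there is no in-paper argument to compare against. Your strategy (interpolate $g_t=f+t\Delta$, build a level-tracking flow from a vector field proportional to $-\Delta\,\nabla g_t/\|\nabla g_t\|^2$ cut off near $\{g_t=u\}$, with the uniform gradient lower bound supplied by absence of quasi-critical points plus compactness of $[0,1]\times W$) is indeed the standard Thom-type isotopy argument, and the bookkeeping you describe for the excursion sets and for interior components is sound in outline.

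However, as a self-contained proof your proposal has a genuine gap, and you name it yourself: the stratum-by-stratum construction of a single continuous vector field tangent to every facet (the controlled/Thom--Mather gluing by induction on codimension) is exactly the content of the lemma, and you resolve it by appealing to ``the cited Lemma B.1 of \cite{bel}'' --- i.e.\ by citing the statement you are supposed to prove. Without carrying out that gluing (showing the patched field still satisfies the transport identity $\partial_t g_t+v_t\cdot\nabla g_t=0$ near $\{g_t=u\}$ off the facets where only the intrinsic gradients are controlled, and that the resulting flow is a stratum-preserving homeomorphism of $W$), the argument is circular rather than complete. A secondary, smaller point: the paper's Definition \ref{def:topo-func} requires an ambient isotopy through homeomorphisms of $\mathbb R^d$, whereas your $\Phi_s$ is only a self-homeomorphism of $W$; for components disjoint from $\partial W$ this is repaired by an isotopy-extension step (or by extending the tangent-to-$\partial W$ field across $\partial W$), but it should be said explicitly.
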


 This lemma implies that contributions in a topological functional over the manifold are determined by the (stratified) critical points of the function.
 Let us give an alternative representation of $ \b_n(u,F) $ obtained by scanning the levels and account carefully for critical points on $ \partial W_n$:

 \begin{proposition}
 \label{prop:decompos-b-n}
 For $ x\in \E(u)$, let $ C_x  $ be the bounded connected component  of $ \E(u)$ containing $ x$, or the empty set if $ x$ is not in a bounded component. Then, with Definition   \ref{def:topo-func}
 \label{repr:scan}
\begin{align*}\b _n(u;F) = \sum_{(x,v)\in \Ys(W_n\times [u,\infty ))}\delta (x,v,C_x),
\end{align*}
where $ \delta (x,v,C_x)$ satisfies  {for some $ \kappa <\infty $}
\begin{align*} | \delta (x,v,C_x) | \le  {\kappa } \#\Ys(C_x\times [u,\infty ))
\end{align*}
and $ \delta (x,v,C_x)$ only depends on the isotropy class of $ C_x$ (hence is not modified upon a $ \Delta $-perturbation without stratified quasi critical point by Lemma \ref{lm:Morse}).
 \end{proposition}

 \begin{proof}
Recall that $\b _n(u;F) = \sum_ C\b(C)$
 and, exploiting the fact that there are a.s.~finitely many stratified critical points on each facet  (Lemma \ref{lm:KR}), all with disjoint values, define for $ (x,v)\in \Ys(C\times [u,\infty ))$
\begin{align*}\delta (x,v,C) = \b _n(v^{ + };F)-\b _n(v^{-};F)
\end{align*}
 where exponents $  + $ and $ -$ denote respectively lower and upper limits in $ v.$
 \end{proof}

\begin{remark}
 {
Even though we are only counting internal components, it is necessary to make a summation also on critical points on facets as otherwise an internal critical point belonging to a component hitting the boundary might be wrongly counted as adding a new internal component, see for example around the top left corner in Figure  \ref{fig:example}.}
\end{remark}

  We then define
the positive and negative parts:
\begin{align*}\b _n^{ + }(u;F) = \sum_{(x,v )}[\delta (x,v,C_x)]_{ + }\\ \b _n^{ - }(u;F) = \sum_{(x,v)}[\delta (x,v,C_x)]_{ - }.\\
\end{align*}
 The representation $ \b _n = \b _{ n}^{  + }-\b _{ n}^-$  with two non-decreasing functionals will be useful in Section \ref{sec:prf-fclt} when assessing the uniform tightness of the functional $ u\mapsto \b _n(u;F).$
Let us enumerate some situations for a triple $ (x,v,C)$ where $ \delta (x,v,C)\ne 0$. It is important to remember that this value is allocated in the scanning process and only depends on levels $ w\ge v$, and the value will not change when crossing critical points below {\blue $v$. The following three cases are illustrated also in Figure   \ref{fig:example}.}
 \begin{enumerate}
\item $x$ only involves internal components, such as when it is the  {uppest or lowest} point of a component or   of a hole,
or the merging of two internal components $ C,C'\in \mathscr C(\{F\ge v\};W_n,\partial W_n)$, in the latter case $ \delta (x,v,C) = \b(C\cup C')-\b(C)-\b(C')$.

\item $ x$ is a stratified critical point on $ \partial W$ where a component $ C$ touches $ \partial W_n$ at level $ v$, hence $ \delta (x,v,C) = -\b(C)$,
\item $ x$ is an internal critical point which merges an internal component $ C$ with an external one, in which case $ \delta (x,v,C) = -\b(C)$.  
\end{enumerate}

\begin{figure}[h]
  \centering

\begin{tikzpicture}[scale = 1]
  \tikzset{
    win/.style={draw=black, line width=1.2pt, rounded corners=6pt},
    internal/.style={fill=gray!35, smooth cycle, tension=0.95},
    boundary/.style={fill=blue!25, smooth cycle, tension=0.95}
  }

  \draw[thick] (0,0) rectangle (15,6);

  \draw[thick]
    plot [smooth cycle, tension=0.7]
      coordinates {(2,1.2) (5,1.4) (6.5,2.5) (5.4,4.5) (3.2,4.0) (2.2,2.4)};

  \node[cross out, draw, very thick, minimum size=6pt, inner sep=0pt] (xmark) at (4,3) {};
  \node[anchor=west] at ($(xmark.east)+(0.15,0)$) {$ (2)1$};

   \node[cross out, draw, very thick, minimum size=6pt, inner sep=0pt] (xmark) at (7.8,3.4) {};
  \node[anchor=west] at ($(xmark.east)+(0.15,0)$) {$ (1) 1$};

   \node[cross out, draw, very thick, minimum size=6pt, inner sep=0pt] (xmark) at (9,3.4) {};
  \node[anchor=west] at ($(xmark.east)+(0.15,0)$) {$(1)  1$};
   
  \draw[thick] (8,3) circle (0.8) (9.6,3) circle (0.8) (8.8,3) ;

  \draw[thick] (10.2,4.2) .. controls (9.4,5.2) and (11.3,6) .. (12,6) .. controls (12.7,6) and (14.0,5.2) .. (13.2,4.0) .. controls (12.6,3.6) and (11.2,3.6) .. (10.2,4.2) -- cycle;
  
  \node[cross out, draw, very thick, minimum size=6pt, inner sep=0pt, label=above:{$(0) -1$}] at (12,6) {};

  \node[cross out, draw, very thick, minimum size=6pt, inner sep=0pt, label=above:{$(2) 1$}] at (12,5) {};
   
  

\def\x{1.8}
\def\y{2}
\draw[boundary,thick] (5.4-\x-\y,4.5)
  .. controls (4.8-\x-\y,4.5) and (4.6-\x-\y,4.1) .. (4.7-\x-\y,3.7)
  .. controls (4.9-\x-\y,3.0) and (5.9-\x-\y,3.0) .. (6.1-\x-\y,3.7)
  .. controls (6.2-\x-\y,4.1) and (6.0-\x-\y,4.5) .. (5.4-\x-\y,4.5) -- cycle;

  \draw[boundary,thick] (1.9-\x,6) arc[start angle=180, end angle=360, radius=1.5];

  \node[cross out, draw, very thick, minimum size=6pt, inner sep=0pt, label=above:{$(1) -1$}] at (1.5,6) {};

  \node[cross out, draw, very thick, minimum size=6pt, inner sep=0pt, label=right:{$(2) 1$}] at (1.5,5.2) {};

  \node[cross out, draw, very thick, minimum size=6pt, inner sep=0pt, label=above:{$(2) 1$}] at (1.5,3.5) {};

  \node[cross out, draw, very thick, minimum size=6pt, inner sep=0pt, label=right:{$(0) -1$}] at (1.5,4.5) {};

  \draw[internal]
    plot [smooth cycle, tension=0.7]
      coordinates {(2,1.2) (5,1.4) (6.5,2.5) (5.4,4.5) (3.2,4.0) (2.2,2.4)};

	\draw[boundary] (10.2,4.2) .. controls (9.4,5.2) and (11.3,6) .. (12,6) .. controls (12.7,6) and (14.0,5.2) .. (13.2,4.0) .. controls (12.6,3.6) and (11.2,3.6) .. (10.2,4.2) -- cycle;
  
  \draw[internal] (8,3) circle (0.8) (9.6,3) circle (0.8) (8.8,3) node[cross out, draw, very thick, minimum size=6pt, inner sep=0pt, label=right:{$(0)-1$}] {};

  \node[cross out, draw, very thick, minimum size=6pt, inner sep=0pt] (xmark) at (4,3) {};
  \node[anchor=west] at ($(xmark.east)+(0.15,0)$) {$ (2)1$};
  
  
   \node[cross out, draw, very thick, minimum size=6pt, inner sep=0pt] (xmark) at (7.8,3.4) {};
  \node[anchor=west] at ($(xmark.east)+(0.15,0)$) {$ (1) 1$};

   \node[cross out, draw, very thick, minimum size=6pt, inner sep=0pt] (xmark) at (9,3.4) {};
  \node[anchor=west] at ($(xmark.east)+(0.15,0)$) {$(1)  1$};

  \node[cross out, draw, very thick, minimum size=6pt, inner sep=0pt, label=above:{$(0) -1$}] at (12,6) {};

  

\draw[fill=white, double=gray!35, line width=0.7pt, double distance=8mm, line cap=round]
(13,1.8) ++(19.47:1.2cm) arc[start angle=19.47, delta angle=360-2*19.47, radius=1.2cm];
  
  \node[cross out, draw, very thick, minimum size=6pt, inner sep=0pt, label=right:{$(1) 1$}] at (12,1.8) {};

  \node[cross out, draw, very thick, minimum size=6pt, inner sep=0pt, label=right:{$(0) -1$}] at (14.1,1.8) {};

 \begin{scope}[shift={(2.0,-1.2)}]
    \path[internal] plot coordinates {(0.0,0.7) (0.0,0.3) (0.4,0.3) (0.4,0.7)};
    \node[anchor=west] at (0.35,0.5) {\scriptsize Internal component};
    \path[boundary] plot coordinates {(3.0,0.7) (3.0,0.3) (3.4,0.3) (3.4,0.7)};
    \node[anchor=west] at (3.55,0.5) {\scriptsize Intersecting boundary};

  \node[cross out, draw, very thick, minimum size=6pt, inner sep=0pt] (xmark) at (10,6) {};
  \node[anchor=west] at ($(xmark.east)+(0.15,0)$) {$(1) 1$};
  \end{scope}
\end{tikzpicture}

  \caption{{\blue Fictitious example thresholded at level $ 0$, each cross marks a critical point or a quasi-critical point  $ x$ at level $ v\geqslant 0$, the mark indicates $ (v) ,\delta(x,v,C) $ where $ C$ is the component of $ x$.}}
  \label{fig:example}
\end{figure}
Here is an alternative strategy of assigning weights to critical points.
\begin{definition}[Reference point]

For a compact connected component $ C$ of $ \mathscr 	C(A;W;\partial W)$, call $ x(C;F) = x(C)\in W \setminus \partial W$ the critical point of $ C$ lowest with respect to the lexicographic order, called {\it reference point of $ C$}, and define
\begin{align*}\delta^{\text{\rm{ref}}} (x,v,C) = \b(C){\mathbf 1}\left[ x = x(C) \right]
\end{align*}for $ (x,v) \in Y(C\times [u,\infty )).$ We have indeed
\begin{align*}
\b _n(u,F) =\sum_{(x,v)\in Y(W_n\times [u,\infty ))}\delta ^{\text{\rm{ref}}}(x,v,C_x)  {1_{ C\cap \partial W_{ n} = \emptyset }}.
\end{align*}

 \end{definition}

It will be apparent in the proof of Lemma \ref{lm:proba-topo} that this representation is easier to handle when trying to evaluate the probability that the topology is not modified upon the perturbation by some field $ \Delta $, i.e. $ \b_n (u;F) = \b_n (u;F + \Delta )$. The reason is that it is easier to bound the probability that two critical points exchange lexicographic order during the perturbation than to bound the probability that they exchange value, i.e. that one becomes lower than the other.

\subsection{Topological perturbation}
\label{sec:perturb}
For $B\su \R^d$, define by $\W^{(B)}$ an independent resampling of the Gaussian white noise $\W $ in $B$, i.e.

\begin{align*}
\W^{(B)}(A)= \W(A\setminus B)+\W'(A\cap B)
\end{align*}
where $\W'$ is a white noise independent of $\W$ with the same law. We will only consider countably many such resamplings, so we can assume all $ \W^{(B)}$ are independent.
Define as in \eqref{eq:convol-repr}
\begin{align*}
F^{(B)}(x):=&q\star \W^{(B)}(x)\\
\Delta _ B:=&F-F^{(B)}.
\end{align*}

Remark that $ F^{(B)}$ has the same distribution as $ F$ because $ \W$ and $ \W^{(B)}$ have the same law. Also, $ \Delta _ B$ should be small far away from $ B.$
To quantify this, Assumption \ref{ass:gaussian-intro} yields that it  is possible to apply the following result to the field $ F $ and its derivatives.

\begin{proposition}
\label{prop:bd-Delta}Let $ A\su \R^d$  {convex}, $ B\su \R^{ d},r = 1 + d(A,B).$
For $ | \a | \le 3$,  { $ \partial _{ \a }\Delta _B$ satisfies \eqref{eq:scale-GFs-concentration}  with $ \sigma ^2 $ of the form $c r^{ d-2\eta }$, and $ \lambda $ depending on $ q$, hence} there is finite $ c>0$ depending on $ \a ,q,d$ such that for $ t\geqslant 0$
\begin{align*}
\mathbf P(\|\partial _{\a }\Delta _ B\|_A>t)\le c(1 +  \text{\rm{diam}}(A)^d)\exp\left(-\frac{ct^2}{(1+d(A,B))^{-2\eta +d }}\right).
\end{align*}

\end{proposition}

\begin{proof}
 { We have  with  \eqref{eq:cov-WN} and Assumption \ref{ass:gaussian-intro}, for $ x\in A,$}
\begin{align*}
  \Var(\Delta_ B (x) )=\Var(q\star(1_ B(\W-\W'))) = & 2 \int_ B | q(x-y) |^2 dy  \\
 &\le c'\int_{ B(0,r)^c}(1+\|x-y\|)^{-2\eta }dy\le c''r^{-2\eta +d},
\end{align*}
and  { \eqref{eq:scale-GFs-concentration} is satisfied. Indeed,} for $ x,y\in A,$
\begin{align*}
\sigma ^{ -2}\mathbf E [  \Delta _B(x)-   \Delta _B(y)]^2 = & \sigma ^{ -2}\int_B(q(x-z)-q(y-z))^2dz\\
 \le &\sigma  ^{ -2} \int_ B\|x-y\|^2\sup_{ t\in A} \| \nabla q(t-z)\|^2dz\\
 \le &c\sigma  ^{ -2} \|x-y\|^2\int_ B(1 + d(z,A))^{ -2\eta }dz\\
 \le &c'\sigma ^{ -2}\|x-y\|^2\int_{r}^\infty (1 + s)^{ -2\eta }s^{ d-1}ds\\
 \le &c '\|x-y\|^2.
\end{align*}
and $ c'$ depends on $ q,d$. Then the conclusion comes from Proposition \ref{prop:concentr}. Since the derivatives of $ q$ satisfy the same hypothesis, the proof works exactly the same with $ \partial _\a \Delta _B$ instead of $ \Delta _B$ (and $ \partial _\a q$ instead of $ q$).
 \end{proof}

Henceforth, we only consider $B:=H_{i,j}$  the open half-space of points closer from some $j\in \mathbb Z ^d$ than from some $i\in \mathbb Z ^d$, in which case use the shorthand notation
\begin{align*}
F^{(i,j)} = F^{(H_{i,j})};\;\Delta _{{i,j}}=\Delta _{H_{i,j}}
\end{align*}
and remark that $\Delta _{i,j}$ is independent from $\Delta _{j,i}$ because $ H_{i,j}\cap H_{j,i} $ has negligible intersection.

In the remainder of this section and the entire paper, it will be essential to be able to bound the expected number of critical points whose value is contained in a certain interval $I$. This will be done with the classical Kac-Rice formula.

\begin{lemma}[Kac-Rice]  
\label{lm:KR}
	Let $ m\ge 1,  Q\su \R^m$ compact and $ G: Q\to \R$ a $ \mc C^3$ smooth {stationary} centred Gaussian field. Let $ W_x = \big(\nabla G(x),\partial _{i,j}G(x),i\le j\big).$ Assume
\begin{align}
\label{ass:ND-AT}
  {(G(x),W_{ x})}\textrm{ is a non-degenerate Gaussian vector  {with density bounded by some $ c _{ G}<\infty $}. }
\end{align}
Then for $ I\su \R$
\begin{align*}\mathbf E(\#\{x\in Q: \nabla G(x) = 0, G(x)\in I\})\le c  | Q | | I |
\end{align*}
where $ c<\infty $ depends  on   the law of $ G$. 
\end{lemma}

\begin{proof}
Assumption \eqref{ass:ND-AT} yields by  Corollary 11.2.2 in \cite{AT07} that

\begin{align*}\mathbf E(\#Y(Q\times I)) = \int_{Q }{\mathbf E( |\det \Hess_{G}(x) | \mathbf1_{\{G(x)\in I\}} | \nabla G(x) = 0 )}  {f(x)}dx \end{align*}
 {where $ f(x)$ is the  density of  $ \nabla G(x)$ in $ 0$, uniformly bounded by assumption.}
Denote by $ \Lambda _{v}$ the (Gaussian) conditional distribution of $ G(x)$ given $W_x = v$. By basic results on conditional Gaussian vectors, its law is of the form $ \Lambda _{v} \sim \mc N(m(v, { x}),\sigma _x)$, in particular the variance $ \sigma _{ x}$ does not depend on $ v$ and is bounded from below by some $ \sigma >0$,  hence the conditional density is bounded by $ c _{ G}'<\infty .$ Then
\begin{align*}
\mathbf E( |\det \Hess_{G}(x) | \mathbf1_{\{G(x)\in I\}} | \nabla G(x) = 0 ) =& \mathbf E(\mathbf P(G(x)\in I | W_x) |\det \Hess_{G}(x) || \nabla G(x) = 0)\\
 =& \mathbf E(\mathbf P( \mc N(m(W_x, x)\in  I  )) |\det \Hess_{G}(x) || \nabla G(x) = 0)\\
 \le &c_{ G}'{ | I | } \mathbf E( |\det \Hess_{G}(x) | | \nabla G(x) = 0),
\end{align*}
and these quantities do not depend on $ x$ by stationarity.
As before, $ \Hess_{ G}(0)$ conditional to $ \nabla G(0) = 0$ is a non-degenerate Gaussian vector, hence it has finite moments of all order, and $ \mathbf E( |\det \Hess_{G}(0) | | \nabla G(0) = 0)<\infty $, as asserted.
\end{proof}

\subsection{Topological lemma}
{Fix a window $ W_n$ and   recall that } $  \mathsf Q_i  { = i + [0,1)^{ d}, i\in \mathbb Z ^{ d}}$, and  $ Q_i: =  \mathsf Q_i \cap W_n.$ Let $ \b $ a topologically additive functional as in Definition \ref{def:topo-func}.
We have the decomposition
$\b(u;W_n) = \sum_{i:Q_i\ne \es }\b _{[i]}(u;W_n)
$
where
\begin{align*}\b _{[i]}(u;W_n)&: = \b _{[i]}:= \sum_{(x,v)\in Y(Q_i,[u,\infty ))}
\delta^{\text{\rm{ref}}} (x,v,C_x)\\
&= \sum_{C\in \mathscr C(\E(u),Q_i,\partial W_n)}\b(C)\mathbf1_{\{{ x(C;F)\in Q_i }\}} .
\end{align*}
 We use implicitly that a.s.~no critical point is on the boundary of a $ Q_i$, formally proved with Lemma \ref{lm:KR} applied with the Lebesgue-zero set $ \cup_i \partial Q_i$.

In this section, we consider the effect above $ Q_i$ of a perturbation applied to the field. More precisely, the white noise is resampled far away in $ H_{i,j}$ (points closer from $ j$ than $ i$), and we denote by $ \tilde \b _{[i],j}$ the value of $ \b _{[i]}$ after perturbation, i.e. for the field $ F + \Delta _{i,j}$:
\begin{align*}\tilde \b _{[i],j}{ (u;W_n): = \tilde \b _{[i]}^j:} = \sum_{C\in  \mathscr C(\{F + \Delta _{i,j}\ge u\},Q_i,\partial W_n)}\b (C)\mathbf1_{\{{ x(C,F + \Delta _{i,j})\in Q_i }\}}.\end{align*}

 {We extend these definitions to the interval $ I = [u_- , u_+]$:
\begin{align*}
\b _{[i]}(I) = \b _{[i]}(u_+;W_n)-\b _{[i]}(u_-;W_n)
\end{align*}and similarly for $ \tilde \b _{[i]}^j(I).$}
The content of the following lemma is to show that both values are equal with high probability.

\begin{lemma}
\label{lm:proba-topo}
Let $ \delta _j = 1 + \frac13 \| i-j \| ,\e >0$. Then,  
\begin{align}
\label{eq:lm-topo-proba}
	\mathbf P( {  \exists n:} \b _{[i]} {(I;W_n)}\ne \tilde \b _{[i],j} { (I;W_n)})\le c_{\e }\min( \delta_j ^{d/2-{\blue \eta} + \e  }\;,\; | I | ).
\end{align}
\end{lemma}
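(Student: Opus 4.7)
The plan is to establish the two halves of the $\min$ separately: a Kac-Rice estimate giving the $|I|$ term, and a combined perturbation/Kac-Rice argument giving the polynomial decay in $\delta_j$. Throughout I take $\gamma = \eta/d - 1/2$ as defined in the paper and interpret the stated exponent accordingly.

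\textbf{The $|I|$ bound.} Starting from the $\delta^{\mathrm{ref}}$ representation
\begin{equation*}
\beta_{[i]}(u;W_n) = \sum_{(x,v)\in Y(Q_i\times[u,\infty))}\delta^{\mathrm{ref}}(x,v,C_x),
\end{equation*}
the quantity $\beta_{[i]}(I;W_n)$ changes with $u$ through $I$ only through events witnessed by (stratified) critical points of $F$ whose value lies in $I$: either a new local maximum appears in $Q_i$, a reference point is superseded, or the topology of a component anchored at $Q_i$ evolves at a level in $I$. The second bullet of Definition \ref{def:topo-func} bounds the total absolute contribution by the number of such critical points. Since $F+\Delta_{i,j}$ has the same law as $F$, Lemma \ref{lm:KR} bounds the expected number of critical points of $F$ (and of $F+\Delta_{i,j}$) with values in $I$ by $c|I|$ uniformly in $n$ by stationarity, and Markov's inequality combined with a union bound delivers the $|I|$ estimate.

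\textbf{The decay bound.} Choose parameters $R\sim \delta_j^{1/2}$ and $\tau\sim \delta_j^{-\gamma-\varepsilon/2}$, and set $A:=B(i,R)$; for $R\ll\delta_j$ one has $d(A,H_{i,j})\gtrsim \delta_j$. Consider the favorable event
\begin{equation*}
E_\tau:=\Bigl\{\max_{|\alpha|\le 2}\|\partial^\alpha\Delta_{i,j}\|_A\le \tau\Bigr\}.
\end{equation*}
Proposition \ref{prop:bd-Delta} applied to derivatives of $q$ up to order $2$ (which satisfy the same decay hypothesis under Assumption \ref{ass:gaussian-intro}), combined with a union bound, gives $\mathbf P(E_\tau^c)\le cR^d\exp(-c\tau^2\delta_j^{2\eta-d})$; the exponent $-2\gamma+2\eta-d = (d-1)(2\eta/d-1)$ is strictly positive, so $\mathbf P(E_\tau^c)$ is super-polynomially small in $\delta_j$. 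On $E_\tau$, the interpolating fields $F+t\Delta_{i,j}$, $t\in[0,1]$, differ from $F$ by at most $\tau$ in $C^2$-norm on $A$; non-degeneracy of the Hessian (Proposition \ref{prop:ND-strong}) then implies that absence of critical values of $F$ in the $c\tau$-neighborhood of $u_-$ or $u_+$ inside $A$ rules out stratified quasi-critical points of $(F,\Delta_{i,j})$ at levels $u_\pm$ in $A$. Lemma \ref{lm:Morse} then produces an isotopy between components of $\{F\ge u_\pm\}$ and $\{F+\Delta_{i,j}\ge u_\pm\}$ restricted to $A$, preserving both $\beta(C)$ by isotopy invariance and the reference points $x(C)$ by stability of the lex-ordering of non-degenerate critical points under sufficiently small $C^1$ perturbations. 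The Kac-Rice bound on the excluded $c\tau$-neighborhoods gives probability at most $c\tau R^d\sim \delta_j^{d/2-\gamma-\varepsilon/2}$, which is of the form $\delta_j^{d/2-\gamma+\varepsilon}$ after absorbing constants and logarithmic corrections.

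\textbf{Main obstacle.} The principal difficulty is that, without Assumption \ref{ass:perco}, a component anchored at $Q_i$ may extend far beyond $A$ into regions where $\Delta_{i,j}$ is not small and where the isotopy argument above does not directly apply. I would handle this via a dyadic decomposition at scales $R_k = 2^k$: on the shell $\{R_k<\|x-i\|\le R_{k+1}\}$ (which still lies at distance $\gtrsim \delta_j$ from $H_{i,j}$ for $R_k\ll\delta_j$), Proposition \ref{prop:bd-Delta} still provides super-polynomial control of $\Delta_{i,j}$, while at larger scales the critical-point cost in each annulus is controlled by Kac-Rice. Summing across scales costs at most an $\varepsilon$-loss in the exponent. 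The quantifier $\exists n$ is benign: the white noise is $n$-independent, all Kac-Rice bounds are uniform in $n$ by stationarity, and the perturbation event $E_\tau^c$ does not depend on $n$, so taking the union over $n$ does not degrade the estimate.
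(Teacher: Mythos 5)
Your $|I|$ half is fine and is essentially the paper's argument (if the increments over $I$ differ, one of them is nonzero, which forces a critical point with value in $I$ in the relevant region; then Kac--Rice, Lemma \ref{lm:KR}, plus Proposition \ref{prop:ND-strong}). The decay half, however, has a genuine gap at its central step. You claim that on $E_\tau$ the "absence of critical values of $F$ in the $c\tau$-neighborhood of $u_\pm$ inside $A$" rules out stratified quasi-critical points, justified by Kac--Rice and "non-degeneracy of the Hessian (Proposition \ref{prop:ND-strong})". A quasi-critical point of $F_t=F+t\Delta_{i,j}$ only gives a point $x$ with $\|\nabla F(x)\|\le\tau$ and $|F(x)-u_\pm|\le\tau$; to upgrade this to an exact critical point of $F$ with value near $u_\pm$ you would need a quantitative, pointwise lower bound on $|\det H_F|$ near $x$, which Proposition \ref{prop:ND-strong} (a statement about non-degeneracy of the Gaussian law of the derivatives) does not give. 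The events that must be excluded are near-singular configurations of $F$ itself: $\|\nabla F\|$ and $|F-u_\pm|$ simultaneously small, a near-critical point with near-zero $\det H_F$, two critical points with nearly equal first coordinate (so the lexicographic reference point can switch), and a near-critical point of $\nabla_{\f}F$ on $\partial Q_i$, $\partial Q_i^m$ or $\partial W_n$. None of these is controlled by counting exact critical points via Kac--Rice; this is precisely why the paper proves and uses the quantitative Bulinskaya lemma (Lemma \ref{lm:bulinsk}, together with Lemma \ref{lm:det-dens} for the Hessian determinant), which yields bounds of order $(\textrm{size of }\Delta)^{\alpha}$ with $\alpha<1$ and handles the boundary, lex-order and degenerate-Hessian events ($\B_2$--$\B_5$) that your sketch dismisses with a non-quantitative appeal to "stability under sufficiently small perturbations" --- the required smallness there is random, and quantifying it is the heart of the proof.

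Your proposed fix for the "main obstacle" is also not viable as stated: a dyadic decomposition with "critical-point cost in each annulus controlled by Kac--Rice" does not bound the probability that the component meeting $Q_i$ has large diameter --- Kac--Rice bounds expected numbers of critical points, not percolation-type diameter tails. The lemma is used under Assumption \ref{ass:perco}, and the paper simply invokes it (with a level buffer $\xi$ and a confining box $Q_i^m$, $m=\delta_j^{\varepsilon}$) to get $\mathbf P(\Omega_1^c)\le c\delta_j^{-\eta+d/2}$; you should do the same rather than try to avoid it. Finally, note the exponent: as the lemma is used in \eqref{eq:bde}, the intended bound is $\delta_j^{-\gamma d+\varepsilon}=\delta_j^{d/2-\eta+\varepsilon}$, whereas your choice $\tau\sim\delta_j^{-\gamma-\varepsilon/2}$, $R\sim\delta_j^{1/2}$ would at best give $\delta_j^{d/2-\gamma}$, which is strictly weaker for $d\ge2$ and would strain the downstream moment estimates (Lemma \ref{lem:mom1}, Proposition \ref{prop:mom}, Proposition \ref{vc_prop}); the correct scale for the perturbation is $\|\Delta_{i,j}\|\sim\delta_j^{-\gamma d}$ on sets at distance $\delta_j$ from $H_{i,j}$, and the loss relative to this scale should only be the Bulinskaya exponent $\alpha<1$ and the factor $m^{d}=\delta_j^{d\varepsilon}$.
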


 We prove separately that the LHS is bounded by each of the terms in the minimum in the RHS. First, we have the trivial bound, exploiting the fact that $ \b_{[i]}$ and $ \tilde \b_{[i],j}$ have the same law,
\begin{align*}
\mathbf P( { \exists n:}\b_{[i]}\ne \tilde \b_{[i],j})\le &\mathbf P({ \exists n:}\b_{[i]}\ne 0\text{ \rm or }\tilde \b_{[i],j}\ne 0)\\
\le& 2\mathbf P( { \exists n:} \b_{[i]}\ne 0)\\
\le & 2\mathbf P(Y(Q_i\times I)\ne \es )\le 2\mathbf E(\#Y(Q_i\times I)).
\end{align*}
The bound by $  | I | $ hence follows from Lemma \ref{lm:KR}, invoking also Proposition \ref{prop:ND-strong}.

Bounding by the first term in the minimum in Lemma \ref{lm:proba-topo} is much more tricky.
We see the resampling as a continuous temporal evolution that leads from $ \b _{[i]} $ to $ \tilde \b _{[i]}^j $. We interpolate between the original field $F=F_0:=F(\cdot ;\W )$ and the resampled field $F_1:=F(\cdot ;\W ^{(H_{i,j})})$. The resampling around $j$ is done continuously through the evolution $
F_t=F+t\Delta ,t\in [0,1],$ where we recall that $ \Delta : = \Delta _{i,j}$
is small around $i$ (Proposition  \ref{prop:bd-Delta}).
 The proof of Lemma \ref{lm:proba-topo} resides in the idea that the topology of $ \{F\ge u\}\cap Q_i$ is the same as $ \{F+\Delta \ge u\}\cap Q_i$ if no ``topological event'' occurs during the evolution $ t\to F_t$. It is formalized by the ``deterministic'' Lemma \ref{lm:deter-topo} below.

  {We must first control the size of the connected component of $ Q_i.$
  Let $ m = \delta _j^{\e /3d}$.  Let $ Q_i^m$ the cube with faces parallel to axes centred in $ i$ with sidelength $ m $. {By Assumption \ref{ass:perco}, there is $ \xi >0$ such that
\begin{align*}
\mathbf P( \mathsf{ diam}(\mc C(\{F\ge u-\xi \})  {,Q_i})\ge m)\le c\delta _j^{-\eta + d/2}.
\end{align*}  Introduce the events
\begin{align*}\Omega _1:& = \{\mc C(\{F\ge  { u-\xi }\},Q_i)\su Q_i^m\}\\
\Omega _2 : &= \{\sup_{x\in Q_i^m}\|\Delta (x)\|< {\xi }\}.
\end{align*}
If $ \Omega _1,\Omega _2$ are satisfied, indeed $ F +t \Delta \ge F - \xi $ on $ Q_i^m$, hence for $ t\in [0,1],$
\begin{align*}(\{F + t\Delta >u\}\cap Q_i^m)\su (\{F>u-\xi \}\cap Q_i^m)
\end{align*}
and
\begin{align*}\mc C(\{F + t\Delta \ge u \},Q_i)\su \mc C(\{F\ge u-\xi \},Q_i)\su Q_i^m.
\end{align*}
These events are indeed   {realized with high probability} using also Proposition \ref{prop:concentr}:
\begin{align*} \mathbf P(\Omega _1^c) + \mathbf P(\Omega _2^c)
\le & c \delta _j^{-\eta + d/2} \end{align*}
where the constant depends on the law of $ F$ and $u,\xi ,\e $.
}

  \begin{lemma}
  \label{lm:deter-topo}Recall $ I = [u_-,u_+].$ 
 Assume that $ \Omega_1,\Omega _2$ and the following hold:
 \begin{enumerate}
\item  {There is no stratified quasi critical point $ (t,x)
$  for $ t\in [0,1],x\in $ 
$ Q_i$, or $x\in   Q_{i}^{m} $  {at level }
$u_-$ or $u_+$.}
\item There is no $(t,x)\in [0,1]\times \partial Q_i$ such that $ \nabla F_t(x) = 0$.
\item {\blue For all $ t\in [0,1]$}, there are no two critical points $ x\ne y\in Q_i^m$ of $ F_t$ such that $ x_1= y_1$.
 \item { For $ (t,x)\in [0,1]\times  Q_i^m, \det \Hess_{F_t}(x)\ne 0$ if $ \nabla F_t(x) = 0$. }
\end{enumerate}
Then, $$\b _{ [i]}^{ }(I { ;W_n})= \tilde \b _{[i],j }(I { ;W_n}).$$

 \end{lemma}

 \begin{proof}

  { Denote by $ y_1(t),\dots ,y_{p(t)}(t)$ the critical points of $\mc C(\{F_t\ge u_-\},Q_i)\su Q_i^m$ at time $ t $  { such that $ F_{t}(y_k(t))\in I$}. Then,
	 \begin{enumerate}
		 \item [(i)]
  $ y_k(t)$ satisfies the equation
  $
\nabla F_t(y_k(t)) = 0
$,
\item [(ii)] the Jacobian matrix of $y\mapsto  \nabla F_t$ is
	$
J_t(y) = \left(
\partial _j \partial _iF_t(y)
\right) = \Hess_{F_t}(y),
$
\item [(iii)]
and by (4), $ \det J_t(y_k(t))\ne 0$.
	 \end{enumerate}
	 Hence, by the Inverse Function Theorem, $t\mapsto y_k(t)$ can be $ \mc C^1$ extended on a neighborhood of $ t$, which means the $ y_k(t)$ are $ \mc C^1$ trajectories $ [0,1]\to Q_i^m$.

  The number of critical points could in principle depend on $ t$, but since the component is contained in $ Q_i^m$ during the evolution process they cannot escape $ Q_i^m$, and due to the previous lemma, points stay all along the evolution and $ p(t) = p = const.$   
   {   The corresponding values $ F_t(y_k(s))$ remain in $ I$ thanks to point (1) and by continuity in $ t$.}} {Conversely, there cannot be a critical point $ y\in \mc C(\{F_t\in I\},Q_i)^{ c}$ that enters the component at some time $ t$, because that would mean $ \nabla F_{ t}(y) = 0,F_{ t}(y)\in \{u_{ -},u_{  + }\}$ and hence contradict (i).}

 According to Lemma \ref{lm:Morse}, point (1)    yields an isotopy $ \Gamma $ between connected components
  of
 $ \{F_{t}\ge u\}$
  { in $ Q_{i}^{m}$ at times $ t = 0$ and $ t\in [0,1]$. Those of these components that touch $ Q_{i}$ at time $ 0$ are the same than at time $ t$, because there is no stratified quasi critical point on the boundary  $ \partial Q_{i}.$ Let us label such components at time $ t = 0$ by $ C_{1},\dots ,C_{\ell}$, and   $ C'_1 = \Gamma (C_1),\dots ,C'_{\ell} = \Gamma (C_{\ell})$   the components of $ \{F_t\ge u\}$ touching $ Q_i$ at some time $ t\in [0,1]$. }
 We used that since $ \Omega _1,\Omega _2$ are satisfied, all these components are contained in $ Q_i^m$. In particular, the isotopy yields that $ \b(C_k) = \b(C'_k)$ for $ 1\le k\le \ell$. By point (3), two critical points cannot exchange order in the lexicographic order. Hence, we can write that the reference points are $ y_{i_1}(t),\dots ,y_{i_{\ell}}(t)$ for some fixed indexes $ i_1,\dots ,i_{\ell}\in \{1,\dots ,p\}.$ Hence,
\begin{align*}\b _{ [i]}(u_- { ;W_n})- \tilde \b _{[i]}^{j}(u_-{ ;W_n}) =&\sum_{k = 1}^{\ell }
\b (C_k)\mathbf1_{\{y_{i_k(0)}\in Q_i { ,C_k\su W_n}\}}-\b (C'_k)\mathbf1_{\{y_{i_k(1)} \in Q_i { ,C_k'\su W_n}\}} \\
=& \sum_{k = 1}^{\ell }\b (C_k)(\mathbf1_{\{y_{i_k}(0)\in Q_i, { C_k\su W_n}\}}-\mathbf1_{\{y_{i_k}(1)\in Q_i, { C'_k\su W_n\}}})
\end{align*}
(and a similar representation holds for $ \b _{[i]}(u_+)- \tilde \b _{[i]}^j(u_+)$). Since (2) is satisfied, $ y_{i_k}(t)$ stays at a positive distance from $ \partial Q_i$, hence $ 1_{\{y_{i_k}(0)\in Q_i\}} = 1_{\{y_{i_k}(t)\in Q_i\}}$.

Finally, { since there is no tangency point of some component $ C_k,C'_k$ with the boundary of $ W_n$ by (3), the status $ C_k\su W_n$ or $ C_k'\su W_n$ cannot change},  and the previous sum vanishes.

 \end{proof}

 \newcommand{\B}{  \mathsf b}

To complete the proof of Lemma \ref{lm:proba-topo}, we hence have
\begin{align*}\mathbf P( { \exists n :}\b _{[i]}\ne \tilde \b _{[i]}^{j})\le &\mathbf P(\Omega _1^c) + \mathbf P(\Omega _2^c) +\sum_{ i = 1}^{ 4}\underbrace{\mathbf P((i)\textrm{ is not satisfied })}_{ = :B_k}\\
\le &c\delta _j^{\eta -d/2} +  B_1 + B_2 + B_3 + B_{4}.
\end{align*} Let us estimate the $ B_i$'s.
  For point (1), consider a facet $ \f$ of $ Q_i,Q_i^{ m}$ or $ \partial W_{ n}\cap Q_i^{ m}$.  There is no stratified quasi-critical point on $ \f$ at level $ u$ if  \begin{align*}
 | F(x)-u | >\| \Delta F(x)\|\textrm{ or }\| \nabla _{\f}F(x)\|>\| \nabla _{\f}\Delta (x)\|, x\in \f.
\end{align*}
Similarly, point (2) is implied by
\begin{align*} { \|\nabla F(x) \| >\| \nabla \Delta (x)\|,x\in \partial Q_i }.
\end{align*}
Fix $ \f,u\in \{u_-,u_{  + }\}$. Let
\begin{align*}\B_1  =  & \mathbf P(\exists x\in \f: | F(x)-u | < | \Delta (x) | , \| \nabla _{\f}F(x) \| < \| \nabla_{\f} \Delta (x) \| )\\
\B_2 = & \mathbf P(\exists x\in \partial Q_i: | \nabla F(x) |< \| \nabla \Delta (x) \| )\\
\B_3 = &\mathbf P(\exists t\in [0,1],\exists x\ne y \in Q_i^m \textrm{ such that } x_1= y_1 \textrm{ and } \nabla F_t(x) = \nabla F_t(y) = 0)\\
 {\B_4 =} & { \mathbf P(\exists (t,x)\in [0,1]\times Q_i^m: \nabla F_t(x) = 0, \det \Hess_{F_t}(x) = 0)}
\end{align*}
so that the $ B_i$ are bounded bythe  $ \B_i$. To conclude the proof of Lemma  \ref{lm:proba-topo}, we prove that each $ \B_{ i}$ is in $ O(\delta _j^{ d/2-\eta  + \e })$.
The strategy consists in bounding the probability that  the minimum of a $ \R^{ p + 1}$-valued  Gaussian field on a compact of $ \R^p$ is small. It is a quantitative version of Bulinskaya's lemma \cite{Bulinskaya} in the spirit of the Nazarov and Sodin's version \cite{NazSod}, applicable also to some non-Gaussian fields  { such as $ (t,x)  \mapsto ( \nabla F_{ t}(x), \det \Hess_{F_{t}}(x))$}.

 \begin{lemma}
[Bulinskaya]
\label{lm:bulinsk}  
Let \( p'<p \) be integers, \( A \) a bounded Borel  set of \( \R^{p'} \). For $ 1\le i\le p,$ let \( g_i:A\to \R^{d_i} \) be a smooth centred Gaussian field  {such that\begin{itemize}
\item $ \|g_i(x)\|$ and $ \| \nabla g_i(x)\|$ are bounded  by $ \| (\partial _{ \a }F(x);\a \in I_i)\|$ for some finite $ I_i$,
\item   $G(x) := (g_i(x))\in \R^{p^{*}}
$ has a uniformly bounded density, where \( p^{*} = \sum_id_i.\)
\end{itemize} }
Let \( h_i:\R^{d_i}\to \R \)  smooth functions such that $ h_{ i}$ and its derivative are bounded by $c_{ l} \|x\|^{ l}$ for some $l>0$, and \( f_i(x): = h_i(g_i(x)),1\le i\le p. \) Assume also that each \( h_i(X) \) has a density bounded on \( [-\e, \e ] \) for some \( \e >0 \) on the input \( X\sim \mathcal N(0,I_{d_i}) \). Let
 \(a\in (0,1) \) and
\begin{align*}\psi (x) = (f_i(x))_{i = 1}^p\in \R^p.
\end{align*}  There is \( c_a ,\tau_a >0 \) such that for \( \tau \in (0,\tau_a ) \), 
\begin{align*}
\mathbf P(\inf_{x\in A}\|\psi (x)\|\le \tau )\le c_a  (1 +  \text{\rm{diam}}(A))^{2d} \tau ^{a }.
\end{align*}
\end{lemma}
 {In the applications of this lemma, except for \( \B_4 \), we take $ h_i(s) = s-u$  for some $ u\in \{0,u_-,u_{  + }\}$, hence the assumption on the $ h_{ i}$ is satisfied. In cases $ \B_1 - \B_{ 2},$ we  apply the lemma with $ g_i(x) = (\partial _{ \a_1 }F(x),\dots ,\partial _{ \a_{d_{ i}} }F(x))$ for some multi-indexes $ \a_k ,$ over $ A = \f$ some facet of a cube in $ \R^{ d}$,  hence the density of $ G(x)$ is bounded uniformly  by Proposition  \ref{prop:ND-strong}.

In all cases, detailed below, we first establish $ \B_{ k} \le \mathbf P (  \inf_{  x\in A}\| \psi  (x)\|< c\|\vp \|_A)$ for some other field of the form $ \vp (x) = ( \partial _{ \a }\Delta (x),\a \in I)\in \R^p$ for some set $ I$ of ordered multi-indices, and $ c>0$. Then put $ \sigma   = \delta _j^{ d/2-\eta    }$  and $ a ' = \sqrt{a }<1$. For $ s,t\in \mathbb{R}, s\leqslant t$ implies that $ s\leqslant \sigma ^{ a'}$ or $ t\geqslant \sigma ^{ a'}$. Hence we have for $  \sigma  <\tau_{a'},$ i.e. for $ j$ sufficiently large, using Bulinskaya's lemma and Proposition  \ref{prop:bd-Delta},
\begin{align}
	\notag\B_{ k}\le \mathbf P (\inf_{x\in A}\|\psi (x)\|<c\|\vp \|_A)\le &\mathbf P ( \inf_{ x\in A}\|\psi(x)\|<\sigma  ^{ a' }) + \mathbf P (c\|\vp  \|_A>\sigma ^{ a' })\\
	\notag\le & c_{ a' }(1 + \text{\rm{diam}}(A))^{ 2d}\sigma  ^{ a } + c_{ d}(1 +  { \rm diam}(A)^d)    e^{-  { c}    \sigma ^{ 2a' } \delta _j^{ -d + 2\eta  }/2}\\
\label{eq:bulinsk-field}\le &c_{ a' }(1 + m)^{ 2d}\delta _j^{ a(d/2-\eta )} + c_{ d}(1 + m^d)e^{-  { c}   \delta _j^{ (2\eta -d)(1-a' )   }/2}.
\end{align}
Recalling $ m = \delta _j^{ \e /3d}$, and choosing $ a  $ such that $ a (d/2-\eta ) +(d + 2) \e/3d <(d/2-\eta  + \e )$, the right hand side is in $ O(\delta _j^{ d/2-\eta  + \e })$.
 This will conclude the proof of Lemma \ref{lm:proba-topo}.
Let us give the details.
 }

\begin{enumerate}
\item [$( \B_1)$]  Lemma \ref{lm:bulinsk} can be applied on a facet  \( A = \f \subset  \R^{p'} \) with $p = p'+ 1$, $h_i(s) =s\textrm{ for }i\le p',h_p (s) = s-u$,  \(\psi (x) = (\partial _1F(x),\dots ,\partial _{p'}{F(x)},F(x)-u)\). The Gaussian field \( G(x) = (\nabla _{ \f}F(x),F(x)) \) is non-degenerate for each \( x \) with Proposition \ref{prop:ND-strong}, hence since \( A \) is compact and \( x\to \det \Cov \left(G(x)\right) \) is continuous, \( G(x) \) has a uniformly bounded density.

Let \( \vp (x) = (\nabla_{ \f} \Delta (x),\Delta (x)) \). Bulinskaya's lemma yields with  \eqref{eq:bulinsk-field}
\begin{align*}
\B_2\le& \mathbf P(  \exists  x\in A:\|\psi (x)\|\le 2\|\vp (x)\|,x\in Q_i^m) \le c\delta _j^{ d/2-\eta  + \e }.
\end{align*}

\item[($ \B_2$)] Reason similarly with \( p '=d-1,p= d \), \( \psi (x) = \nabla F(x),h_i(s) = s,\vp (x) = \nabla  \Delta (x) \).
\item[$ (\B_3)$]
 For \( (x,\hat y)= (x,(y_2,\dots ,y_d))\in \R^{2d-1}, \) denote by \( \tilde y= (x_1,y_2,\dots ,y_d) \). Let \( p' = 2d-1 \) and $$ A= \{(x,\hat y)\in Q_i^{m }\times \R^{d-1}: \tilde y\in Q_i^{m },x\ne \tilde y\}.$$
Define the operator \( D^2F(x,\hat y)=\|x- \tilde y\|^{-1}(\nabla F(x)- \nabla F(\tilde y)) \) for \( x\ne \tilde y \).
 We want to estimate the probability that there is some \( t\in [0,1] \) such that
\begin{align*}
\nabla F(x)+t \nabla \Delta (x)= \nabla F(\tilde y)+t \nabla \Delta (\tilde y)= 0.
\end{align*}
It implies (technique of the divided differences)
\begin{align*}
\| \nabla F(x)\|\le \| \nabla \Delta (x)\|,\|D^2F(x,\hat y )\|\le \|D^2\Delta (x,\hat y)\|.
\end{align*}

We wish to apply Bulinskaya's lemma  with \( p' = 2d-1,p = 2d \) to the field
\begin{align*}
\psi (x,\hat y)= (\nabla F (x),D^2F(x,\hat y))\in \R^{2d}.
\end{align*}
		{Before justifying the assumption, we have the Hessian bound $$ \|D^2F(x, \hat y)\|\le   \|\Hess_{ \Delta }\|_A\leqslant \| (\partial ^{ \alpha } \Delta )_{  | \alpha  | \leqslant 2}\|.$$ 
		Hence, its derivatives supremum has finite moments of every order.  Also, by  \eqref{eq:bulinsk-field}
		\begin{align*}\B_3\le \mathbf P(\inf_{(x,\hat y)\in A}\|\psi (x,\hat y)\|\le \|( \nabla \Delta ,  \Hess_{ \Delta })\|_{ Q_{ i}^{ m}})\le c\delta _j^{ d/2-\eta + \e  }.
\end{align*}}

 It remains to check the non-degeneracy condition. Since \( A \) is not closed, the non-degeneracy of each \( \psi (x, \hat y) \) is not enough. Reason by contradiction: assume that for some sequence \( (x_n, \hat y_n)_n \) in \( A \) the max density of \( \psi (x_n,\hat y_n) \) grows to \(\infty \). By compactness, up to taking a sub-sequence, \( (x_n, \tilde y_n)\to (x_{\infty },\tilde y_{\infty })\in  (Q_i^{m })^2.\)
 \begin{itemize}
\item First possibility: \( x_{\infty }\ne \tilde y_{\infty } \), hence the law of \( (\nabla F(x_{\infty }), \nabla F( \tilde y_{\infty })) \) is degenerate, it contradicts Proposition \ref{prop:ND-strong}.
 \item Second possibility: \( x_{\infty }= \tilde y_{\infty } \). Up to taking a sub-subsequence, by compactness of \( \mathbb S ^{d-1} \), there is a unit vector \( z \) such that \( (x_n- \tilde y_n)\|x_n- \tilde y_n\|^{-1}\to z \). Therefore \( D^2F(x_n, \hat y_n)\to \partial _{z} \nabla F(x_{\infty }).\) It follows that \( (\nabla F(x_{\infty }),\partial _{z}\nabla F(x_{\infty })) \) is degenerate, which again contradicts Proposition \ref{prop:ND-strong}.
\end{itemize}

 \newcommand{\triple}[1]{ \vvvert #1\vvvert}
 \item[$ (\B_4)$] 
	 {	 Henceforth, we let \( \| \cdot \| \) be the Frobenius norm on the space of \( d\times d \) matrices. Note that we could replace the norm by a different one at the cost of changing the multiplicative constants in the argument below. 		 Then, }for  matrices $ H,D$, for some $ c_p,c_p'<\infty, $
\begin{align*}
 | \det(H + D)-\det(H) | &\le  c_p\|D\|\sup_{t\in [0,1]}  \|D\|\sup_{ t\in [0,1]}\sum_i | \textrm{ cof }(H + tD)_{i,i} |\\
 \le & c_p'\|D\|(\|H\| + \|D\|)^{d-1},
\end{align*}
 {where \( \textrm{ cof }(H)_{ i,i} \)  is the cofactor of matrix  \( H\) at line $ i$ and column $ i.$}
The latter inequality yields that \( \det \Hess_{F_t}(x) = \det(\Hess_{F}(x) + t\Hess_{\Delta }(x)) \) can only vanish for some \( t\in [0,1] \) if
\begin{align*}
 | \det(\Hess_{F}(x)) | < c \|\Hess_\Delta \|_{Q_i^{m }}(\|\Hess_{F}\|_{Q_i^{m }} + \|\Hess_\Delta \|_{Q_i^{m }})^{d-1}.
\end{align*}
Hence, $ \B_5$ is upper bounded by
\begin{align*}
 \mathbf P\left(\exists x\in Q_i^m: \nabla F(x)\le \| \nabla \Delta (x)\|,\det \Hess_{F}(x) | < c\|\Hess_\Delta \|_{Q_i^{m }}(\|\Hess_{F}\|_{Q_i^{m }} + \|\Hess_\Delta \|_{Q_i^{m }})^{d-1}\right).
\end{align*}
 { We still invoke Bulinskaya's lemma with \( p ' = d,p = d + 1,A =  Q_i^{m } \),
\begin{align*} (g_1(x),\dots ,g_d(x)) = &\nabla F_t(x),\\
g_{d + 1}(x) =& (\partial _{i,j}F(x),i\le j)\in \R^{d(d + 1)/2},\\
h_i(s) =& s,s\in \R,i\le d,\\
h_{d + 1}((a_{i,j})_{i\le j}) =& \det((a_{i,j}) )
\end{align*} hence \begin{align*}\psi (x) =& (\nabla F(x),\det \Hess_{F}(x)).
\end{align*}
By Proposition \ref{prop:ND-strong}, \(G(x): = (g_i(x))_{1\le i\le d + 1} \) has a non-degenerate distribution, and \( A =  Q_i^{m } \) is compact.
The fact that \( h_{d + 1}(X_{i,j}) = \det((X_{i,j})) \) has a bounded density for i.i.d.  Gaussian entries \( X_{i,j},i\le j \) follows from Lemma \ref{lm:det-dens} below.
We hence have for \( a <1 \) some \( c \) such that, by Lemma \ref{lm:bulinsk},
\begin{align*}p_{\tau }: =  \mathbf P( \exists x\in Q_i^{m } : \|\nabla F (x)\|<\tau ,| \det \Hess_{F}(x) | <\tau )\le c\tau ^{a },\tau >0.
\end{align*}
We then have as in  \eqref{eq:bulinsk-field}, with $ \sigma  = \delta _j^{a' (d/2-\eta )}$ for any $ a'\in (0,1),\varepsilon '>0,$
\begin{align*} \B_4\le &c \delta _j^{ d/2-\eta + \e  }   +\mathbf P (\inf_{ x\in Q_{ i}^{ m}}|\det \Hess_{F }(x)|<\delta _j^{ a' (d/2-\eta )}) \\
&+  \mathbf P( \|\Hess_\Delta \|_{ Q_{ i}^{ m}}(\|\Hess_{F}\| + \|\Hess_\Delta \|_{ Q_{ i}^{ m}})^{d-1}>\delta _j^{ a'(d/2-\eta )}) )\\
\le &c'\delta _j^{ d/2-\eta  + \e } +  \mathbf P(\|\Hess_{\Delta }\|_{ Q_{ i}^{ m}}>\delta _j^{ a'(d/2-\eta ) + \e '}) + \mathbf P(\|\Hess_{F}\|_{ Q_{ i}^{ m}}>\delta _j^{ \e' })\\
\le &c\delta _j^{ d/2-\eta + \e  }
\end{align*}using Proposition \ref{prop:concentr} for the last term,
where \( \e ' \)  is such that $ a'(d/2-\eta ) + \e '<d/2-\eta $.

Finally, all terms have been dealt with and give a contribution whose magnitude is not larger than the RHS of \eqref{eq:lm-topo-proba}, thereby concluding the proof.

}
\end{enumerate}

 \begin{proof}[Proof of  lemma  \ref{lm:bulinsk}]Let $ 1\leqslant i\leqslant p.$
	 If for some ${x_0 \in A} , \max_{i \le p}| f_i(x_{0}) | <\tau $, then,
\begin{align*}
	\max_{y\in B(x_0,\tau )\cap A} | f_i(y) | \le \tau (1+\| \nabla f_i\|_{A}).
\end{align*}

 Let $ \g \in (0,1)$, and
\begin{align*}
X= \int_A\frac1{\prod_{i = 1}^p | f_i(x) | ^{\g }}dx.
\end{align*}
Let $ \Omega = \{\inf_{x_0\in A}\|\psi (x_0)\|\le \tau \}.$
 {Up to working instead on $   A + B(0,1)  := \{a + y;a\in A,y\in B(0,1 )\}$, assume without loss of generality  that for all $ y\in A,$ $  | A\cap B(y,\tau  ) | \geqslant \kappa _{ p'}\tau ^{ p'}$ for some $ \kappa _{ p'}>0$ and $ \tau \le  \frac{ 1}2   .$ Let $ B = \prod _{ i} (1 + \| \nabla f_{ i}\|_A)$.}
 Then if $ \Omega $ is realised,
\begin{align*}
X\ge \int_{A\cap B(x_0,\tau )}\frac1{\prod_{i = 1}^p | f_i(x) | ^{\g }}dx\ge   {\kappa _{ p'}} \tau ^{p'}\prod_i\frac1{\tau ^{\g }(1+\| \nabla f_{ i}\|_{A})^{\g }}\geqslant \kappa _{ p'}\tau ^{p'-p\g }\frac1{B^{ \g }}.
\end{align*}
 {By the assumptions on $ g_{ i}$ and $ h_{ i}$, Proposition \ref{prop:concentr} yields  for $r > 1$ that
 $
	\mathbf E (B^{ r\g })\le c(1 + \text{\rm{diam}}(A)^d)$
.}
Let $ r' = (1-r^{-1})^{-1}.$
Hence, with  {Markov's and} H\"older's inequality
\begin{align*}
\mathbf P(\Omega )\le& \tau ^{p\g -p'}\mathbf E(B^{ \gamma }X)\leqslant  \tau ^{p\g -p'}\mathbf E(B ^{ \gamma r})^{\frac1q}\Big[
\mathbf E\Big(
\int_A\prod_i | f_i(x) | ^{-\g }dx
\Big)^{r'}
\Big]^{1/r'}\\
\le &c\tau ^{p\g -p'}(1 + \text{\rm{diam}}(A))^d\Big[
 | A | ^{r'}\mathbf E \Big(
\int_A\prod_i | f_i(x) | ^{-\g r'}\frac{ dx}{ | A | }
\Big)
\Big]^{1/r'}\\
\le &c\tau ^{p\g -p'} (1 + \text{\rm{diam}}(A))^{ 2d} (\sup_{x\in A} \mathbf E(\prod_i | f_i(x) | ^{-\g r'}))^{1/r'}.
\end{align*}
Since $ p\ge p' + 1,$ we can choose $ \g $ such that $ p\g -p'= a $ and then $ r$ such that $ \g r'<1$.

By assumption, the covariance matrix of $ G(x)= (g_i(x))_{i = 1}^m\in \R^{p^{*}}$ is not degenerate for each $x\in A\su \R^{m'}$, it is also a continuous function of $ x$ because the field is smooth, hence its determinant is a non-vanishing continuous function, hence uniformly (in $ x$) bounded from below, and the density of $ G(x)$ is uniformly bounded from above: there is $ C$  {depending on the density} such that for $ X_1,\dots ,X_p$ independent Gaussian vectors with $ X_i\sim \mathcal N(0,I_{p_i})$, for all $ \vp\ge 0 ,$ for all $ x,$
\begin{align*}
\mathbf E(\vp (G(x)))= \int_{\R^{m^{*}}}\vp (y)d\mathbf P_{G(x)}(y)\le C\mathbf E(\vp (X_1,\dots ,X_{p })).
\end{align*}
In particular,
\begin{align*}
\sup_{x\in A}\mathbf E(\prod_{i = 1}^p | f_i(x) | ^{-\g q}) = &\sup_{x\in A}\mathbf E(\prod_{i = 1}^p | h_i(g_i(x)) | ^{-\g r'})\\
\le& C\mathbf E(\prod_{i = 1}^p | h_i(X_i) | ^{-\g r'})\\
 = & C\prod_{i = 1}^p\mathbf E( | h_i(X_i) | ^{-\g r'})\\
\le& C'(1 + \int_{[-1,1 ]} | u | ^{-\g q'}\d u)^p<\infty .
\end{align*}
 \end{proof}

 \begin{lemma}
 \label{lm:det-dens}
 Let a random symmetric matrix $ M$ with centred Gaussian entries in the upper diagonal forming a non-degenerate Gaussian vector. Then $ \det(M)$ has a bounded density in $ 0$.
 \end{lemma}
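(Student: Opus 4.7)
The plan is to induct on the dimension $d$. The base case $d=1$ is immediate: $\det M = M_{11}$ is a centred Gaussian with strictly positive variance, hence has bounded density on $\mathbb R$.

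For the inductive step, partition $M$ by its principal $(d-1)\times(d-1)$ submatrix $N$,
\[
M = \begin{pmatrix} N & v \\ v^T & a \end{pmatrix}, \qquad v \in \mathbb R^{d-1},\ a \in \mathbb R.
\]
The upper-triangular entries of $N$ form a non-degenerate Gaussian sub-vector, so by the inductive hypothesis $\det N$ has bounded density near $0$ and is nonzero almost surely. Conditionally on $N$, the pair $(v, a)$ is Gaussian with a conditional covariance not depending on $N$, so $a$ given $(N, v)$ is Gaussian with some positive variance $\sigma^2$ and affine mean $\mathrm m(N,v)$. The Schur complement identity then yields $\det M = \det N \cdot Y$ with $Y := a - v^T N^{-1} v$, reducing the problem to bounding
\[
f_{\det M}(0) \;=\; \mathbf E\!\left[\frac{f_{Y \mid N}(0)}{|\det N|}\right].
\]

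The main technical step is the estimate $f_{Y\mid N}(0) \le C\,|\det N|^{1/2}$, up to logarithmic corrections in $|\det N|^{-1}$. I would establish it by Fourier inversion $\phi(y) = (2\pi)^{-1}\int e^{ity-t^2/2}\,dt$: applied to $f_{Y\mid N,v}(0) = \sigma^{-1}\phi((v^TN^{-1}v - \mathrm m(N,v))/\sigma)$ and integrated over $v$, this turns $f_{Y\mid N}(0)$ into a one-dimensional integral against the characteristic function (given $N$) of the Gaussian quadratic form $v^TN^{-1}v$; after absorbing the affine term by centring $v$, the modulus of that characteristic function factorises as $\prod_{i=1}^{d-1}(1 + 4t^2/\nu_i(N)^2)^{-1/4}$, where $\nu_i(N)$ are eigenvalues of $N$ in the iid case and eigenvalues of the whitened operator $\Sigma_v^{1/2}N^{-1}\Sigma_v^{1/2}$ in general. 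Splitting the $t$-integration at $|t| = |\nu_i(N)|$ and using that the product decays like $|\det N|^{1/2}/|t|^{(d-1)/2}$ for $|t| \gg \max_i |\nu_i(N)|$ yields the bound.

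Combining these two ingredients, $f_{\det M}(0) \lesssim \mathbf E\!\left[|\det N|^{-1/2}(1+|\log|\det N||)^{d-1}\right]$, which is finite because the inductive hypothesis $\mathbb P(|\det N| \le t) \le K t$ for small $t$, together with Gaussian tails of $\det N$, makes the integrand $s^{-1/2}(\log(1/s))^{d-1}$ integrable near $0$. This gives bounded density of $\det M$ near $0$ and closes the induction. The hard part is the sharp exponent $\tfrac12$ in the bound $f_{Y\mid N}(0) \lesssim |\det N|^{1/2}$: any cruder estimate, such as $f_{Y\mid N}(0) = O(1)$, would leave the full $\mathbf E[|\det N|^{-1}] = \infty$ under the inductive hypothesis, so the Gaussian quadratic-form analysis must be carried out at precisely this sharp order.
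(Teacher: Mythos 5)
Your overall scheme (Schur complement $\det M=\det N\,(a-v^TN^{-1}v)$, condition on $N$, induct on the dimension) is a genuinely different route from the paper's, but the step you yourself flag as ``the hard part'' does not hold up as argued. Writing $\nu_1,\dots,\nu_{d-1}$ for the eigenvalues of (the whitened) $N$, your Fourier argument bounds $f_{Y\mid N}(0)$ by $\frac1{2\pi}\int e^{-\sigma^2t^2/2}\prod_i(1+4t^2/\nu_i^2)^{-1/4}\,dt$, i.e.\ by the integral of the \emph{modulus} of the characteristic function. That quantity controls $\sup_y f_{Y\mid N}(y)$, and it is genuinely of order $\min(1,|\nu_{\min}|)$ (up to logarithms): on the range $|t|\le|\nu_{\min}|$ every factor is of constant order, so the integral is at least $c\min(1,|\nu_{\min}|)$. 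This is \emph{not} $O(|\det N|^{1/2})$ once several eigenvalues are small. Concretely, if all $d-1$ eigenvalues equal $\delta$, then $|\det N|^{1/2}=\delta^{(d-1)/2}$, the true value $f_{Y\mid N}(0)\asymp\delta^{(d-1)/2}$ (a chi-square computation), but your bound from the split integral is $\asymp\delta$, which for $d-1\ge 3$ is far larger. The sharp pointwise bound $f_{Y\mid N}(0)\lesssim|\det N|^{1/2}$ is plausibly true, but it uses positivity/anti-concentration of the quadratic form in the small-eigenvalue directions (and needs care when small eigenvalues of opposite signs cancel), which is exactly the information the modulus bound throws away. Moreover, the weaker bound you can actually extract, $\min(1,|\nu_{\min}|)$, does not close the induction: it reduces $\mathbf E[f_{Y\mid N}(0)/|\det N|]$ to $\mathbf E\big[1/\prod_{i\ge2}|\nu_i|\big]$, which is not implied by the inductive hypothesis $\mathbf P(|\det N|\le t)\le Kt$ and in fact fails for larger $d$. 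A secondary gap: for ``bounded density \emph{near} $0$'' you need to control $f_{\det M}(x)=\mathbf E\big[f_{Y\mid N}(x/\det N)/|\det N|\big]$ for small $x\ne 0$, where the argument $x/\det N$ can be arbitrarily large when $\det N$ is small; a bound at the single point $0$ is not enough, so the estimate must be made uniform (or quantified in $y$) as well.

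For comparison, the paper avoids conditioning altogether: by non-degeneracy the density of the entries is dominated by a scaled GOE density, so it suffices to treat GOE, and there one inducts on the dimension using the explicit joint eigenvalue density $c_d\prod_{i<j}|l_i-l_j|\,f(l_1)\cdots f(l_d)$, conditioning on the largest eigenvalue and bounding the repulsion factors crudely by $2l_{d+1}$, to get $\mathbf P(|\det M_d|<a)\le C_d a$. If you want to salvage your decomposition, you would need to prove the sharp conditional estimate by a real-space argument (diagonalise $N$, condition on its eigenvalues, and use that landing $a-v^TN^{-1}v$ near $0$ forces each small-eigenvalue coordinate of $v$ into a window of width $\sqrt{|\nu_i|}$, treating sign cancellations separately), rather than through the characteristic-function modulus.
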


 \begin{proof}
 Let $ G = (M_{i,j},i\le j)$ be the centred Gaussian vector forming the entries, and let $ G' = (M'_{i,j})$ the GOE model, i.e. the $ M'_{i,j}$ are independent with $ \Var(M'_{ii}) = 2,\Var(M'_{i,j}) = 1,i<j$. The non-degeneracy condition yields that the density $ f_{G}$ is uniformly bounded by $ cf_{G'}(c'\cdot )$ for some constants $ c,c'$, hence it is enough to prove the result for the GOE model.

We prove by induction that there is $C_d<\infty$ such that, if $M_d$ is the $d$-th order GOE, $P(|\det(M_d)|<a)<C_d a$ for $a>0$.
We use the classical result of random matrix theory that the eigenvalues $(l_1,\dots ,l_d)$ of $M_d$ have the explicit joint density $c_d \prod_{i<j} |l_i-l_j|f(l_1)\dots f(l_j)$ for some Gaussian density $f$.
Hence, we can explicitly write

\begin{align*}
	&P(|\det(M_{d+1})|<a)\\
&\quad	=c_d\int_{\R^{n+1}}1_{|l_1\dots l_{d+1}|<a}\prod_{i<j}|l_i-l_j|f(l_1)\dots f(l_{d + 1})dl_1\dots dl_{d + 1}\\
 &\quad = 2c_d\int_0^{\infty }\left(
\int_{ \R^n} 1_{|l_1\dots l_{d+1}|<a}\prod_{i<j} | l_i-l_j | f(l_1)\dots f(l_{d })dl_1\dots dl_d
\right)f(l_{d + 1})dl_{d + 1}
\end{align*}
using the symmetry of $ f$.

Up to a combinatorial term, we can reduce the multiple integral to $(d+1)$-tuples such that $  l_{d+1} > | l_{d + 1} | >\dots > | l_1 | >0$, and we have the crude bound $|l_{d+1}-l_i|<2 l_{d+1} $ for $i<d+1$.  We then have, using the induction hypothesis,

\begin{align*}
P(|\det(M_{d+1})|<a)\le & c_d'\int_0^\infty l_{d+1}^d \int{}1_{ | l_1\dots l_d | <a/l_{d+1}}\prod_{i<j\le d}|l_i-l_j|f(l_1)\dots f(l_{d+1})dl_1\dots dl_{d+1}\\
=&c_d'\int_0^\infty l_{d+1}^d P(|\det(M_d)|<a/l_{d+1})f(l_{d+1}) dl_{d+1}\\
\le & c'_d \int_0^{\infty }l_{d+1}^d C_d \frac{a}{l_{d+1}}f(l_{d+1})dl_{d+1}\\
\le & c''_d a  \int_0^{\infty}l^d\frac{f(l)}{l}dl\\
\le & C_{d+1}a.
\end{align*}
 \end{proof}

  %
%
%
%
\section{Proof of the multi-variate CLT, Theorem \ref{thm:fix_clt}}
\label{sec:fix_clt}
Now, we prove the fixed-level CLT from Theorem \ref{thm:fix_clt}. We proceed in two steps. First, in Section \ref{ss:pf_clt}, we prove the CLT asserted in Theorem \ref{thm:fix_clt} with a possibly degenerate variance. Second, in Section \ref{ss:pf_pos}, we show the positivity of the limiting variance in the univariate case.

%
%
\subsection{Fixed-level and multi-variate CLTs}
\label{ss:pf_clt}
Now, we establish the asymptotic normality of the {topological functionals.

Essentially, the proof idea for asymptotic normality at a fixed level relies on the stabilisation and moment arguments from \cite[Theorem 3.1]{yukCLT}. This technique was designed for dealing with functionals from a Poisson point process in a bounded domain, which is inconvenient in the current setting. Indeed, the white noise $\W$ is defined in all of $\R^d$, and modifications at large distances still have a small but non-vanishing effect on the number of critical points in a given domain. This problem also appears in the investigation of the component counts of excursion sets in \cite{bel}. To provide a suitable analog of \cite[Theorem 3.1]{yukCLT} for the setting where the white noise is defined in the entire Euclidean space, \cite[Theorem 3.2]{bel} is established.

{\blue The key elements of the CLT proof in  \cite[Theorem 1.2]{bel} are a stabilisation and a moment condition \cite[Lemma 3.7]{bel}. Once these conditions are established, the arguments from \cite[Theorem 3.2]{bel} also apply in the present setting with the exception of one additional complication coming from an integer constraint. More precisely, in \cite[Theorem 3.2]{bel} takes values in the positive integers and this makes it possible to exploit that $L^2 \ge L$ holds for all $L =0, 1, \dots$. To address this  issue, in Proposition \ref{prop:mom} below, we establish item 3 not only for the exponent $2 + \e$ but also for the exponent equal to 1.
}
\def\uu{\boldsymbol u}

To state the {\blue moment and stabilisation} conditions precisely, we first need to introduce the costs associated with resampling. 
Set $\uu := (u_1, \dots, u_K)$. By Cramer-Wold, it suffices to show asymptotic normality of 
$${\blue \beta _n^{}(\uu; F) := \sum_{i \le K} a_i \beta _n (u_i;F),}$$
where $a_1, \dots, a_K \in \R$ and  where we write  {$ \beta _n (u_i;F)$ as in  \eqref{eq:b_n}}. We set the global variation on $ Q_j$ resampling
\begin{align}
	\label{eq:bdej}
	B_{\De, j}^{(n)} :=\b_n\big(\uu; F\big)  - \b_n\big(\uu; F^{(Q_j)}\big) =  \sum_{i \in W_n}B_{\De, i, j}(\uu, W_n)
\end{align}
 {\blue where $B_{\De,i,j}(\uu, W_n) = \beta _{[i]}(\uu, W_n)- \tilde \beta _{[i],j}(\uu, W_n)$ denotes the contribution to $B_{\De, j}^{(n)}$ coming from critical points in the cube $Q_i$.} To ease notation, we henceforth often write $B_{\De, i, j}$ instead of the more verbose $B_{\De, i, j}(u, W_n)$ when the value of $n$ is clear from the context. 

 $ B_{\Delta ,j}^{(n)}$ represents the total variation when $ Q_j$ is resampled. Recall that $\g := \eta/d -1/2$.
%
%

\bepr[Stabilisation condition; Lemma 3.7 of \cite{bel}]
\label{prop:stab}
Assume that $\g > 1$. Then, the sequence $\{B_{\De, o}^{(n)}\}_n$ converges a.s.~to some almost surely finite random variable $B_{\De, o}^{(\ff)}$.
\enpr
\bep
We want to show that $B_{\De, i, o}(\uu, W_n)=0$ for all $i \in \Z^d$ with $|i| \ge R$ and $n \ge 1$, where $R$ is an almost surely finite random variable. This will be achieved via the Borel-Cantelli lemma. More precisely,  first, Lemma \ref{lm:proba-topo}   implies that
 \begin{align}
	\label{eq:bde}
	 \mathbf P(\cup_{n \ge 1} \{ B_{\De, i, o}(\uu, W_n) \ne 0\}) \in O\big( |i|^{-\g d + \e}\big).
\end{align}
In particular, since $\g > 1$, the products $|i|^{d - 1} \mathbf P(\cup_{n \ge 1} \{B_{\De, i, o}(\uu, W_n) \ne 0\})$ are summable for $i \in \Z^d$. Hence, applying the Borel-Cantelli lemma shows that the existence of the asserted random $R < \infty$ such that $B_{\De, i, o}(\uu, W_n)=0$ for all $i \in \Z^d$ with $|i| \ge R$ and $n \ge 1$.
\enp
{Let  $W_n^+$ be the box with the same centre as $W_n$ but side length doubled. Let $|W_n|$ be the Lebesgue measure of $W_n$.}
Now, we write $d_{j, n} := \dist(j, W_n^+) { +1}$ for the distance of a site $j$ to the window $W_n^+$. 
To ease notation, we assume henceforth that $\mathbf E\big(Y(Q_{\blue 0} \ti \R)^{l_0}\big) < \ff$  for some $l_0 > 2$  { which results from $ F\in \mathcal{C}^{l_0}$ by Theorem  \ref{thm:gass}}.     in the rest of this section. {For the proof of Proposition \ref{prop:mom} below, we assume that $l_0 > 4$ and that $\g > 12$}. Here, we also note that these assumptions imply that $ \gamma >\frac{ 9l_0}{l_0+6}$. 

%
%
\bepr[Moment conditions; {\blue Lemma 3.7} of  \cite{bel}]
\label{prop:mom}
Assume that $l_0 > 4$ and $\g > 12$. Then, for any sufficiently small $\e > 0$, it holds that
\been
\im $\sup_{n \ge 1}\sup_{j \in \Z^d}  \mathbf E(|B_{\De, j}^{(n)}|^{{\blue 2 + \e}}) < \ff$;
\im  $$\sup_{n, k \ge 1}\fr{\sum_{j  \in \Z^d \co d_{j, n} >k}\mathbf E(|B_{\De, j}^{(n)}|^{{\blue 2 + \e}})}{|W_n|^3 k^{-\eta/3}(k^d + k|W_n|^{(d - 1)/d})} < \ff;$$
\im  $\sup_{n \ge 1}|W_n|^{-1}\sum_{j \in \Z^d}\mathbf E(|B_{\De, j}^{(n)}|^{{\blue b}}) < \ff$ {\blue for both $b = {\blue 2 + \e}$ and $b = 1$. }
\enen
        \enpr
	The key step in the proof of Proposition \ref{prop:mom} are the following moment bounds on $B_{\De, j}^{(n)}$. { Henceforth, we set $|(x_1, \dots, x_d)| := \max_{i \le d}|x_i|$ for the $\ell_\ff$ norm in $\R^d$.}

%
        %
\def\r{\rho}

\bel[Moment bound on $B_{\De, j}^{(n)}$]
\label{lem:mom1}
{\blue Let $\e > 0$, $l_0 > 2$, $\g > 1$ and $m \in \{1\}\cup [2 ,l_0\g/ ( l_0 + \g)]$.} Then, 
$\mathbf E(|B_{\De, j}^{(n)}|^m) \in O\big( d_{j,n}^{-\g d(l_0 - m)/l_0 + \e}(|W_n| \wedge d_{j, n}^{d})^m\big).$
\enl

Henceforth, we let
$$g_{j, n}(k) := \big|\big\{i \in \bar W_n \co |i - j| = k\big\}\big| \in O\big( k^{d - 1} \wedge n^{1- 1/d}\big).$$
denote the number of elements of $\bar W_n:=\{i \in \Z^d \co Q_i \ne \es\}$ at distance $k \ge 0$ from $j \in \Z^d$.
\bep
First{\blue,  for $2 \le m < l_0\g/(l_0 + \g)$,} by the Jensen inequality,  
	$
	\mathbf E(|B_{\De, j}^{(n)}|^m)\le \Big(\sum_{i \in \bar W_n} \mathbf E(|B_{\De, i, j}|^m)^{1/m}\Big)^m.
	$
Now, invoking \eqref{eq:bde} and the H\"older inequality with $q' = l_0/m$ and $p' = l_0/(l_0 - m)$ gives that
\begin{align}
	\label{eq:bd4}
	\mathbf E(|B_{\De, i, j}|^m) \le \big(\mathbf E(|B_{\De, i, j}|^{l_0})\big)^{1/q'} \mathbf P(B_{\De, i, j}\ne 0)^{1/p'} \in O\big(|i - j|^{-\g d /p' + \e}\big)
\end{align}
Note that by our moment bound, we have  $\mathbf E(|B_{\De, i, j}|^{l_0}) < \ff$. Now,
$$\sum_{i \in \bar W_n}|i - j|^{-\g d/(p'm)} \le C\sum_{k \ge d_{j, n}}g_{j,n}(k) k^{-\g d/(p'm) + \e},$$
where the right-hand side is in $O\big((|W_n|\w d_{j,n}^d)d_{j, n}^{-\g d/(p'm)+ \e}\big)$, as asserted.

{\blue For $m = 1$, invoking \eqref{eq:bde} and the H\"older inequality with $q' := l_0$ and $p' := l_0/(l_0 - 1)$,
\begin{align}
	\label{eq:bd4}
\mathbf E(|B_{\De, i, j}|) \le \big(\mathbf E(|B_{\De, i, j}|^{l_0})\big)^{1/l_0} \mathbf P(B_{\De, i, j}\ne 0)^{1/p'} \in O\big(|i - j|^{-\g d /p' + \e}\big)
\end{align}
Now,
$\sum_{i \in \bar W_n}|i - j|^{-\g d/p'} \le C\sum_{k \ge d_{j, n}}g_{j,n}(k) k^{-\g d/p' + \e},$
where the right-hand side is in $O\big((|W_n|\w d_{j,n}^d)d_{j, n}^{-\g d/p'+ \e}\big)$, as asserted.

}
\enp

%
%
Finally, we complete the proof of Proposition \ref{prop:mom}.
\bep[Proof of Proposition \ref{prop:mom}]
{ Note that our assumptions on $l_0$ and $\g$ imply that $ 3<l_0\g/(l_0 + \gamma )$.} In particular, part (1) follows from Lemma \ref{lem:mom1}, and we concentrate on parts (2) and (3). In both parts, we write
$g_k^{(n)} :=|\{i \in \Z^d\co \dist(i, { W_n^+}) = k\}|$
for the number of sites that are at distance $k\ge 1$ from $W_n$.
Moreover, putting $\r(\e) := 1 - {\blue (2 + \e)}/l_0$, we also note that for $k \ge n^{1/d}$, Lemma \ref{lem:mom1} and Jensen give that
\begin{align}
	\label{eq:jdj}
	\sum_{j\co d_{j, n}\ge k}\mathbf E(|B_{\De, j}^{(n)}|^{{\blue 2 + \e}}) \le C\sum_{\ell \ge k}g_\ell^{(n)}|W_n|^{{\blue 2 + \e} }\ell^{-\g d\r(\e)}.
\end{align}
Since $g_\ell^{(n)} \in O\big(\ell^{d - 1} \big)$, the right-hand side is of order $O\big(|W_n|^{{\blue 2 + \e}} k^{d-\g d\r(\e)}\big)$.
\medskip

\noindent {\bf Part (3).}
Since the number of $j \in \Z^d$ with $d_{j, n} \le n^{1/d}$ is of order $O(|W_n|)$, it suffices to deal with the case, where $d_{j, n} \ge n^{1/d}$.
Then, by \eqref{eq:jdj} with $k = \lfloor n^{1/d}\rfloor$,
$$|W_n|^{-1}\sum_{j\co d_{j, n}\ge n^{1/d}}\mathbf E(|B_{\De, j}^{(n)}|^{{\blue 2 + \e}}) \in  O\big(|W_n|^{{\blue 2 + \e}   - \g \r(\e) )}\big).$$
Hence, invoking the assumption $\g >3l_0/(l_0 - 2)$ concludes the proof of part (3). {\blue Next, assume $b =1$. Then, as in \eqref{eq:jdj}, 
\begin{align*}
	\sum_{j\co d_{j, n}\ge n^{1/d}}\mathbf E(|B_{\De, j}^{(n)}|) \le C\sum_{\ell \ge n^{1/d}}g_\ell^{(n)}|W_n|\ell^{-\g (1 - 1/l_0)} \in O\big(|W_n|^{2-\g (1 - 1/l_0)}\big).
\end{align*}
Therefore, 
$|W_n|^{-1}\sum_{j\co d_{j, n}\ge n^{1/d}}\mathbf E(|B_{\De, j}^{(n)}|) \in  O\big(|W_n|^{1   - \g (1 - 1/l_0) }\big)$.
Hence, invoking the assumption $\g >3l_0/(l_0 - 2)$ concludes the proof of part (3). 
}
\bigskip

\noindent {\bf Part (2).}
First, \eqref{eq:jdj} implies that for $k \ge n^{1/d}$,
$$\sum_{j\co d_{j, n}\ge k}\mathbf E(|B_{\De, j}^{(n)}|^{{\blue l_0}}) \in  O\big(|W_n|^{{\blue 2 + \e} } k^{d- \g d \r(\e)) }\big).$$
In particular, noting that $\eta = \g d + d/2$, $l_0 > 4$ and  $\g > 6$,   for $ k\ge n^{1/d}$, the ratio
$$\fr{|W_n|^{{\blue 2 + \e} } k^{d -\g\r(\e)d }}{|W_n|^3 k^{d -\eta/3}}= |W_n|^{{\blue 2 + \e} - 3} k^{(1/6 - \g(\r(\e) - 1/3))d}$$
remains bounded.

Second, we consider the case $d_{j, n} \le n^{1/d}$. Then, by Lemma \ref{lem:mom1},
$$\sum_{j\co k \le d_{j, n}\le n}\mathbf E(|B_{\De, j}^{(n)}|^{{\blue 2 + \e}}) \le C \sum_{k \le \ell \le n}g_\ell^{(n)}|W_n|^{{\blue 2 + \e}}\ell^{(1-\g \r(\e))d}.$$
Since $\max_{\ell \le n}g_\ell^{(n)} \in O\big(n^{1 - 1/d}  \big)$, this is of order $O\big(n^{3d - 1 +d\e} k^{1 + (1-\g \r(\e))d }\big)$  .
Again, since $\g > 12$,  the ratio
$$\fr{|W_n|^{3 +\e  - 1/d} k^{1+(1-\g \r(\e))d }}{|W_n|^{4 - 1/d} k^{d -\eta/3}}= |W_n|^{-1 + \e} k^{1 +d/6 -   \g(\r(\e)-1/3)d}$$
remains bounded, thereby concluding the part of part (2).
\enp

%
%
\subsection{Positivity of variance}
\label{ss:pf_pos}
~

In this section, we establish the positivity of the limiting variance $\s(u)^2$ from Theorem \ref{thm:fix_clt}.
The strategy is to proceed similarly as in the case of the component count considered in \cite[Theorem 1.3]{bel}. The idea is that in the proof the martingale CLT from \cite[Theorem 1.3]{bel}, the authors also derive a non-trivial and highly useful representation of the limiting variance. First, we recall from Proposition \ref{prop:stab} that $B_{\De, o}^{(\ff)}$ is the difference of the Betti numbers before and after the resampling of the white noise in $Q_{\blue 0}$. Now, we have
\begin{align}
\label{eq:s2}
	\s^2: = \s(u)^2: = \mathbf E\big(\mathbf E(B_{\De, o}^{(\ff)}\ba \FF_0)^{2}\big),
\end{align}
where
$\FF_0 := \s\big(\W \cap Q_i\co Q_i {\blue \preceq} Q_{\blue 0} \big)$
is the $\s$-algebra generated by the white noise $\W$ in all cubes of the form $Q_i$ {\blue with reference point $ i$  preceding or being equal to the origin  $ 0$}   in the lexicographic order.  This representation provides a starting point for the positivity proof.

We now proceed along the lines of \cite[Theorem 1.3]{bel} to show that $\s^2 > 0$. In order to avoid redundancy, we sometimes sketch the general argument and concentrate on the steps that are markedly different. We note that some of our steps are in fact simpler because {\blue of the percolation Assumption \ref{ass:perco}. We now briefly comment on the assumption that $\mu(u) \ne 0$.}

\begin{remark}
	\label{rem:bet}
	A key step in the positivity proof of \cite[Theorem 1.3]{bel} is the positivity of $\mu(u)$. While for the considered case of connected components this can be enforced by assuming that the support of the spectral measure has a support containing an open set, the situation is more complicated and we plan to proceed as in \cite[Theorem 1.2c]{wigman1} and \cite[Proposition 5.2]{wigman2}.

	Nevertheless, in the case of actual Betti numbers, the positivity of $\mu(u)$ can be verified along the lines of the \cite{wigman2,wigman1}. More precisely \cite{wigman2,wigman1} deal with Betti numbers of level sets whereas we need excursion sets and general topological functionals. Although this change does not cause major differences, we briefly recall the argument from \cite[Lemma 5.5]{wigman2} to make our presentation self-contained. We first consider any smooth function $h \co \R^d \to \R$ with support in some compact set $D \su \R^d$ such that the excursion set at level $u$ has a positive Betti number.  Now, as in Lemma \cite[Lemma 5.5]{wigman2}, since the field $F$ is smooth it allows for a series representation in terms of eigenfunctions, and we can conclude that with positive probability, $h$ is at most $\e$ away from $F$ in $\mathcal{C}^1(D)$-distance. Hence, we conclude from Morse theory in the form of Lemma \ref{lm:Morse} that with positive probability the excursion sets of $F$ and of $h$ are isotopic. Hence, the excursion set $A(u; F)$  has a positive Betti number with positive probability.
\end{remark}

Recall that $\bt(F; u) $ is the Betti number of the  {union of components of the} excursion set $\mathbf E(u)$    in the interior of $W_n$.
Similarly to \cite{bel}, a key step is that the expected functional of the excursion set becomes smaller after a suitable perturbation of the random field.
%
%
\bel[Reduction of expected functional by perturbation]
\label{lem:pert}
Assume that {$l_0 > 2 \cdot 64^2$} and that $\g > 3$. Then,  there exists $m \ge 1$ and a  {nonempty} open set $S\su \R$ such that  {if
$ \mu (u)>0$, then }
$$\sup_{s \in S} \lim_{n\tff} \big(\mathbf E(\bt(F + s(q\star \one_{mQ_0}), u)) - \mathbf E(\bt(F, u))\big) < 0.$$
\enl
 {and if $ \mu (u)<0$, then}
$$\sup_{s \in S} \lim_{n\tff} \big(\mathbf E(\bt(F + s(q\star \one_{mQ_0}), u)) - \mathbf E(\bt(F, u))\big) > 0.$$
We first explain how Lemma \ref{lem:pert} implies $\s^2 > 0$  {if $ \mu (u)>0$, the case $ \mu (u)<0$ is symmetric}. After that, we prove Lemma \ref{lem:pert}.
Since the proof is parallel to that of \cite[Lemma 3.12]{bel}, we only give the main idea.  The positivity proof relies on the representation \eqref{eq:s2}.  In fact, it will be convenient to generalise the definition of $B_{\De, o}^{(\ff)}$ so as to compare the excursion functional after resampling to those obtained by a deterministic perturbation of the considered random field. More precisely, for $w \in \CC^4$ satisfying $\lim_{|x|\tff} w(x) =0$ {\blue and $\lim_{|x|\tff} \nabla w(x) =0$}, we set
$$D(w) :=  \lim_{n \tff} \big(\bt(F + w; u) - \bt\big(F^{(Q_{\blue 0})}; u\big)\big),$$
{\blue where we now argue why the limit exists. 
Note that we cannot directly use Proposition \ref{prop:stab} because this only deals with $w = 0$. However, we can argue similarly and copy the proof.
First, we write 
$$ \big(\bt(F + w; u) - \bt\big(F^{(Q_{\blue 0})}; u\big)\big) = \sum_{i \in \bar W_n}B_{\De, i}(w),$$
 where $B_{\De,i}(w)$ denotes the contribution to the left-hand side coming from critical points in the cube $Q_i$.
We want to show that $B_{\De, i}(w)=0$ for all  $|i| \ge R$ and $n \ge 1$, where $R$ is an almost surely finite random variable. This will be achieved via the Borel-Cantelli lemma as in Proposition \ref{prop:stab}. More precisely,  arguing as in Lemma \ref{lm:proba-topo}   implies that
$$	 \mathbf P(\cup_{n \ge 1} \{ B_{\De, i}(w) \ne 0\}) \in O\big( |i|^{-\g d + \e}\big).$$
Here, the topological lemma is applied to $\De + w$ instead of $\De$. Note that since both $w$ and $\De w$ vanish at infinity, the arguments carry over.
In particular, since $\g > 1$, the {\blue probabilities} $\mathbf P(\cup_{n \ge 1} \{B_{\De, i, o}(\uu, W_n) \ne 0\})$ are summable for $i \in \Z^d$. Hence, applying the Borel-Cantelli lemma shows that the existence of the asserted random $R < \infty$ such that $B_{\De, i, o}(\uu, W_n)=0$ for all $|i| \ge R$.
}

%
%
\bep[Proof of positivity of the limiting variance]
In the proof, we rely on a variant $B_{\De, 0; m}^{(\ff)}$ of $B_{\De, o}^{(\ff)}$, where instead of a partition into side length 1 boxes, we use boxes of side length $m\ge 1$. {\blue Performing this replacement on the right-hand side of \eqref{eq:s2} will give an expression $\s(u;m)^2$ on the left-hand side. This will satisfy a scaling relation with respect to the variance $\s^2(u)$ from \eqref{eq:s2}. More precisely, instead of covering $W_n$ with $O(|W_n|)$ cubes of side length 1, we now cover $W_n$ with $O(|W_n|/m^d)$ cubes of side length $m$. Therefore, $\s^2(u;m) = m^d\s^2(u)$.
}
To avoid confusion, we stress that there is no clash in notation in the sense that $B_{\De, 0; m}^{(\ff)}$ is different from $B_{\De, i, j}$.

Let $Z_0  { :=  \mathcal W(mQ_0)}$, which is normal random variable with variance $m^d$. Then, the $\mathbf E(B_{\De, 0;m}^{(\ff)} \ba Z_0) = G(Z_0)$  for some measurable function $G: \R \to \R$.
Now, the white noise $\W$ on $mQ_0$ decomposes into $Z_0\one_{mQ_0}(\cdot)$ and an orthogonal part $\mathcal W_1$. Hence, we can represent the random field $F$
$$F  = q \star (\mathcal W|_{(mQ_0)^c} + Z_0\one_{mQ_0}(\cdot) + \mathcal W_1),$$
and also, for any fixed $s \in \R$,
$$ q \star (\mathcal W|_{(mQ_0)^c} + (Z_0 + s)\one_{mQ_0}(\cdot) + \mathcal W_1) = F + w,$$
where $w =  s(q\star \one_{mQ_0})$.
In particular, for any fixed $s \in \R$, we have $\mathbf E(G(Z_0 + s)) = \mathbf E(D(s(q\star \one_{mQ_0})))$, so that $\mathbf E(G(Z_0)) = 0$. Moreover, Lemma \ref{lem:pert} implies that $\mathbf E(G(Z_0 + s)) < 0$ for $s$ contained in an open set. Then, the formula for the conditional variance  gives the asserted $ \s{\blue(u; m)}^2 \ge \Var(G(Z_0)) > 0$.
\enp

\def\d{{\rm d}}
It remains to prove the perturbation property in Lemma \ref{lem:pert}. Before starting the proof, recall that we assume that for the critical points, we have moments up to order ${\blue 2 + \e} = 2 + \e$.
\bep[Proof of Lemma \ref{lem:pert}]
	In the proof, we may assume that $\int_{\R^d}q(x)\d x = 1$. Otherwise, one repeats the proof below after $s$ by $s/\int_{\R^d}q(x) \d x$, which is well-defined since $\int_{\R^d}q(x) \d x \ne 0$ by assumption.  Moreover, we also set
	$$
	\b_n\big(A, F): =\sum_{\substack{(x,v)\in \Ys(W_n\times [u,\infty ))\\ x \in A}}\delta ^{ref}(x,v,C_{x}),$$
and the same for $\b_n\big(A, F + w)$.

%
%
The key observation is to realise that $\lim_{u\tff}\mu(u) =0$. Indeed, we first note that $\mu(u)$ is bounded above by the expected number of   critical points above level $u$ per unit volume. Since the expected number of such critical points is finite so that the dominated convergence theorem yields that $\lim_{u\tff}\mu(u) = 0$. {\blue We now give the details of the arguments.}

{
\blue	To show that $\mu(u) \to 0$, we distinguish between the critical points in the interior and at the boundary. First, for the interior points,  almost surely as $u\tff$, 
$$\Ys\big(\mathsf{int}(Q_{\blue 0})\ti[u,\ff)\big) \to  0.$$ 
Moreover, by Theorem \ref{thm:gass}, we conclude that $\mathbf E \big[\Ys\big(\mathsf{int}(Q_{\blue 0})\ti[u,\ff)\big)\big] <\ff$. Hence, by dominated convergence, also the expected value tends to 0. Second, we deal with the points intersecting the boundary. 
\begin{align*}
\mathbf E\big(|\Ys(\partial W_n \ti [u, \ff)) |\big) \le \sum_{i: Q_i \cap   W_n\neq \emptyset } \mathbf E\big(|\Ys(Q_i)|\big).
\end{align*}
Now, the number of summands is of order $o(|W_n|)$, thereby showing that the boundary contributions to $\mu(u)$ can be neglected.
}

{ Let $B(k)$ denote the ball of radius $k$ in the $|\cdot|_\ff$-norm centred at the origin}. Now, we use that $\mu(u) > 0$ for every $u \in \R$. To justify this note that Theorem 1.2c in (Wigman, 2021) shows the positivity $\mu(u)$ in the setting of level sets, which also extends to excursion sets. Hence, we conclude that  there exists $\zeta > 0$ and an open $S \su \R$ such that  $\sup_{s \in S}\mu(u - s) - \mu(u) < -7\zeta$. 
Invoking the definition of $\mu$ therefore gives  for  $ k>k_0$ sufficiently large
\begin{align}
\label{eq:p1}
        \mathbf E\big(\b_n\big(B(k), F + s\big)\big) - \mathbf E\big(\b_n\big(B(k), F\big)\big) &< -6\zeta k^d,
	\end{align}
	where we write $B(k) := B(0, k)$.
Then, as in \cite[Theorem 1.3]{bel}, we proceed in the following four steps which are valid for $n > m^d$, provided that $m$ is chosen sufficiently large. While it would be possible to extract the specific lower bound for $m$ from the proof, it is not needed for our aims.  For such $m$, we set $k:= m - \sqrt m$, $s\in S$    and where  we set $w:= w_{s, m} := sq\star \one_{mQ_0}$ :
\begin{align}
        \mathbf E\big(\big|\b_n\big(W_n, F\big) - \b_n\big(B(k), F\big) - \b_n\big(W_n \sm B(k), F)\big|\big)&\le \zeta m^d \label{eq:p2}\\
        \mathbf E\big(\big|\b_n\big(W_n, F + w\big) - \b_n\big(B(k), F + w\big) - \b_n\big(W_n \sm B(k), F + w)\big|\big)&\le \zeta m^d \label{eq:p22}\\
        \mathbf E\big(\big|\b_n\big(W_n \sm B(k),  F+ w\big) - \b_n\big(W_n \sm B(k),  F\big)\big|\big)&\le \zeta m^d \label{eq:p4}\\
      \mathbf E\big(\big|\b_n\big(B(k),  F+ w\big) - \b_n\big(B(k),  F + s\big)\big|\big)&\le \zeta m^d \label{eq:p3}.
\end{align}
Hence, as soon as \eqref{eq:p2}--{\blue\eqref{eq:p3}} are satisfied, we obtain the desired
$$\sup_{s \in S} \lim_{n\tff} \big(\mathbf E(\bt(F + s(q\star \one_{mQ_0}))) - \mathbf E(\bt(F))\big) \le -\zeta m^d,$$
We now verify the individual claims separately.
\bigskip

%
%
{\bf Bounds \eqref{eq:p2} \& \eqref{eq:p22}.}
We only deal with \eqref{eq:p2} since the arguments for \eqref{eq:p22} are analogous.
The key observation is that the Betti number is additive in the connected components. Hence, the Betti numbers of components contained in $B(k)$ are taken into account in $\b_n(W_n, F)$. Similarly also the Betti numbers of components contained in $W_n \sm B(k)$ are accounted for in $\b_n(W_n, F)$. Hence, the deviations in \eqref{eq:p2} come from components intersecting $\pa B(k)$.

Hence, it therefore suffices to show that $\sup_{i \in \Z^d}\mathbf E\big(\b_n\big(\CC(Q_i,\mathbf E(u))\big)\big) < \ff$.  To prove this claim, we let $\Om_{i, m}$ denote the event that $m \ge 1$ is the smallest integer such that all connected components hitting $Q_i$ are contained in $Q_i^m$. 
Henceforth, we write $Y = Y^{\partial }$   for the set critical and stratified critical points of the field $F$.
Then, by the Cauchy-Schwarz inequality,
\begin{align*}
	\sup_{i \in W_n}\mathsf E\big(\b_n\big(\CC(Q_i,\mathbf E(u))\big)\big)  &\le \sum_{m \ge 1}\mathbf E\big(Y(B(m))\one\{\Om_{i, m}\}\big) \\
	&\le \sup_{i \in W_n}\sqrt{\mathbf E\big(Y(Q_i)^2\big)}\sum_{m \ge 1}|B(m)| \sqrt{\mathbf P(\Om_{i, m})} < \ff,
\end{align*}
where the last step follows since the excursion set is in the subcritical regime.
\medskip

It remains to deal with  \eqref{eq:p4} and \eqref{eq:p3}, where the idea is to apply Lemmas  \ref{lm:proba-topo} and \ref{lm:deter-topo}.  First, we note that  that $\b_n(B(k), F + w)$ decomposes into the contributions coming from the cubes $Q_j$. That is,
$$\b_n(B(k),  F + w) =: \sum_{j \in B(k) \cap \Z^d} B_j(F+ w),$$
and similarly for $\b_n(B(k), F + s)$.  The key step is now to obtain bounds on $\mathbf E(|B_j(F ) - B_j(F + w)|)$, i.e., on the contribution to the perturbation from each cube $Q_j$.

To this end, we first need to show that the field perturbation $w$ is small. Indeed, we have that
\begin{align}
	\label{eq:wb}
	\sup_{x \in \R^d} |w(x)| \le |s| \sup_{x \in \R^d} \int_{B(m)}|q(x - y)|\d y \le |s| \sup_{x \in \R^d} \int_{\R^d}|q(x - y)|\d y< \ff,
\end{align}
and a similar computation shows the boundedness of derivatives up to order 3.

Next, we recall that by Theorem  \ref{thm:gass} on the finiteness of moments of the number of critical points, we have  $\mathbf E(|B_j(F)|^{{\blue l_0}})< \ff$. Moreover, in Remark {\blue 1 of \cite{GassSte}}, it is argued that this moment bound is uniform over all fields with an upper bound on the absolute value of their derivatives. Hence, we deduce that we also have $\mathbf E(|B_j(F + w)|^{{\blue l_0}})< \ff$    uniformly over all considered perturbations $w$.

We will use this observation as follows. The H\"older inequality with $q' =  {\blue 2 + \e}$ and $\qm ={\blue l_0/(l_0} - 1)$ shows that for every $j \in \Z^d$ the expression 	$\mathbf E(|B_j(F ) - B_j(F + w)|)$ is at most
\begin{align*}
	\mathbf P\big(B_j(F) \ne  B_j(F + w)\big)^{1/\qm}\Big({\mathbf E(|B_j(F )|^{{\blue l_0}})^{1/l_0}} +  {\mathbf E(|B_j(F + w)|^{{\blue l_0}})^{1/l_0}}\Big),
\end{align*}
where as argued above, the second factor is of constant order. Therefore,
\begin{align}
	\label{eq:bjfw}
	\mathbf E(|B_j(F ) - B_j(F + w)|)\le  C\mathbf P\big(B_j(F) \ne  B_j(F + w)\big)^{1/\qm}.
\end{align}
Relying on this observation, we now conclude the proofs of \eqref{eq:p4} and \eqref{eq:p3}.
\bigskip

%
%
{\bf Bound \eqref{eq:p4}.}
First,
\begin{align*}
        &\mathbf E(\big|\b_n(W_n \sm B( k),  F+ w) - \b_n(W_n \sm B( k),  F)\big|)\\
        &\le \sum_{j\co k < |j| \le m}\mathbf E(|B_j(F + w) - B_j(F)|)
         +\sum_{j \co |j| \ge m + \sqrt m }\mathbf E(|B_j(F + w) - B_j(F)|)
\end{align*}
Note that the number of summands in the first sum is of order $O(m^{d - 1/2})$.  Hence, according to \eqref{eq:bjfw}, it suffices to show that
\begin{align}
	\label{eq:bjfw1}
	\max_{j\co k < |j| \le m}{\mathbf P(B_j(F + w) \ne B_j(F))}   \in o(1),
\end{align}
and
\begin{align}
	\label{eq:bjfw2}
	 \sum_{j \co |j| \ge m + \sqrt m }\mathbf P(B_j(F + w) \ne B_j(F))^{1/\qm}   \in o(m^d).
\end{align}
In both cases, we rely on Lemmas  \ref{lm:proba-topo} and \ref{lm:deter-topo}. We apply these results to the family of perturbations $F^{(t)} := F + tw$. Then,  for $ x \in B(m)^c$,
$$|w(x)| = \Big|\int_{B(m)} q(x - u)\d u\Big| \le\int_{|u| > \dist(x, B(m))} |g(u)| \d u \in O(\dist(x, B(m))^{d - \b}).$$
In particular, Lemmas \ref{lm:proba-topo} and \ref{lm:deter-topo} give that   $\mathbf P(B_j(F + w) \ne B_j(F)) \in O(m^{d - \b})$, thereby implying \eqref{eq:bjfw1}. Moreover, since $\eta/d > \qm + 1$, we can bound \eqref{eq:bjfw2} by
$$c_2\sum_{j \co |j| \ge m + \sqrt m }\hspace{-.2cm} \dist(j, B( m))^{-(\eta - d)/\qm}\le c_3\sum_{ i \ge m + \sqrt m}\hspace{-.2cm} i^{d - 1} (i - m)^{-(\eta - d)/\qm} \in O(m^{d - 1/2}),$$
thereby concluding the proof of \eqref{eq:bjfw2}.
 \bigskip

%
%
{\bf Bound \eqref{eq:p3}.}
First, arguing as in \eqref{eq:p4}, it suffices to show that
$$\max_{j\co |j|\le k}\mathbf P\big(B_j(F + s) \ne  B_j(F + w)\big) \in o(1).$$
In both cases, we rely again on Lemmas  \ref{lm:proba-topo} and \ref{lm:deter-topo}. We apply this result to the family of perturbations $F^{(t)} := F + s + t(w - s)$. The arguments are now very similar to the proof of \eqref{eq:p3} but to make the presentation self-contained, we give some details. Indeed, we have that  
$$\sup_{x\in B(k)} |s - w(x)| \le |s| \int_{\R^d \sm B(\sqrt m)}|q(x)| \in O\big(m^{-(\eta - d)/2}\big).$$
Therefore,
$\mathbf P\big(B_j(F + s) \ne  B_j(F + w)\big) \in O(m^{-(\eta - d)/2}).$
Hence, noting that $\eta > d$ concludes the proof of the \eqref{eq:p3}.
\enp

%
%
\section{Proof of the FCLT, Theorem \ref{thm:fclt}}
\label{sec:prf-fclt}
In this section, we prove the functional CLT from Theorem \ref{thm:fclt}. After having established the fixed-level CLT  in Theorem \ref{thm:fix_clt}, we now need to prove tightness. 
We recall that from $ F\in \mathcal{C}^{l_0}$, we can conclude by Theorem  \ref{thm:gass}} that 
$\sup_{i \in \Z^d}\mathbf E\big(Y^\partial(B(i, 1) \ti \R)^{l_0}\big) < \ff$.

Henceforth, we write $Y = Y^{\partial }$   for the set critical and stratified critical points above level $ u$ of the field $F$, where the stratification is with respect to $W_n$, see Section  \ref{sec:morse-topo}.
For deriving the functional CLT, it will be essential to ensure that the decomposed Betti numbers $\beta_n^+$ and $\beta_n^-$ are both non-increasing in the level $u$. To ensure this, we recall from Proposition  \ref{prop:decompos-b-n} $ \beta _n = \beta _n^{ + }-\beta _n^{-}$ with
\begin{align*}\beta _n^{ \pm }(u;F) = \sum_{(x,v,C)}[\delta (x,v,C)]_{ \pm }
\end{align*}
Since the sum of tight processes is tight and symmetry considerations it suffices to prove the tightness statement when replacing $\beta_n$ by $\beta_n^{ + }$.

Set $\tbnp(I) := \beta^+_n(I) - \mathbf E(\beta^+_n(I))$. We prove tightness by verifying the Chentsov-condition from {\blue \cite[Theorem 13.5]{billingsley} (together with Markov's inequality)},
\begin{align}
        \label{chent_eq}
        \mathbf E\big(\tbnp(I)^4\big) \le c |W_n|^2|I|^{5/4},
\end{align}
for suitable $c > 0$, where for $I = [u_-, u_+]$, we set $\tbnp(I) := \tbnp(u_+) - \tbnp(u_-)$. The crucial tool is the cumulant expansion
\begin{align}
        \label{cex_eq}
        \mathbf E\big(\tbnp(I)^4\big) = 3\Var\big( \tbnp(I)\big)^2 + c_4\big( \tbnp(I)\big).
\end{align}
In Section \ref{sec:grid} below {\blue (more precisely, the second paragraph of that section)}, we rely on a trick from {\blue \cite[Theorem 2]{davydov}}. {\blue This trick allows to deduce tightness if the Kolmogorov-Chentsov condition \eqref{chent_eq} holds for \emph{$n$-big} intervals $I$, i.e., to intervals with $|I|^{} \ge |W_n|^{-2/3}$.} Hence, to establish condition \eqref{chent_eq}, we derive refined bounds on variances and cumulants.

%
%
\bepr
\label{vc_prop}
Let $l_0 > 16$ and $\g > 54d/(1 - 8/{l_0})$. Then,
 $$\sup_{n \ge 1}\sup_{I \text{ is $n$-big}}{|W_n|^{-1}|I|^{-5/8}\Var( \btt(I)\big) + |W_n|^{-7/6}c_4\big( \btt(I)\big)} < \ff.$$
\enpr

The variance and cumulant bounds in Proposition \ref{vc_prop} give the tightness.

\bep[Proof of \eqref{chent_eq}; $ n$-big intervals]
First, by combining Proposition \ref{vc_prop} and the identity
$$\mathbf E\big(\tbnp(I)^4\big) \le 3c|W_n|^2|I|^{5/4} +c |W_n|^{7/6}.$$
Since $|I|$ is $n$-big, we deduce that $|W_n|^{7/6} \le |W_n|^2|I|^{5/4}$, as asserted.  It remains to prove {\blue that Kolmogorov-Chentsov on $n$-big intervals implies tightness, see below (more precisely, the second paragraph of Section \ref{sec:grid})}.
\enp

After {\blue showing that Kolmogorov-Chentsov on $n$-big intervals implies tightness}, we prove Proposition \ref{vc_prop} in Section \ref{var_sec}. To achieve this goal, the key task is bound both the   moments for the critical points inside percolation sets, and also to to refine the moment condition so as to reflect the level. To allow for clear reference, we state the result as a separate lemma.

Let $\mc C_i = \mathcal{C}(\{F\ge u_c\};Q_i)$ denote the union of all connected components of $\{F \ge u_c\}$ intersecting the cube $Q_i$. {\blue Here, $u_c$ is the percolation level threshold from Section \ref{sec:gauss}.}
%
%

\bel[Moment bounds]
\label{lem:lev}
Let $l_0 \ge 2^{13}$.  Then,
\been
\im $\sup_{i \in \Z^d}\mathbf E\big(Y^\partial(\CC_i \ti \R)^{{l_0/2}} \big)< \ff$;
\im $\sup_{I \su \Ib}\sup_{i \in \Z^d}|I|^{-31/32}\mathbf E\big(Y^\partial(\CC_i \ti I)^{\sqrt{l_0/2}} \big) < \ff$ for every compact $\Ib \su \R$.
\enen
\enl
\bep
We prove the two parts separately.
\smallskip

\noindent {\bf Part (1).}
Let $K_i\ge 1$ be the smallest integer such that $\CC_i \su B(i, K_i)$. Then, by the Cauchy-Schwarz inequality,
\begin{align*}
\sup_{i\in \Z^d}\mathbf E\big(Y^\partial(\CC_i \ti \R)^{l_0/2} \big) &\le \sup_{i\in \Z^d}\sum_{k \ge 1}\mathbf E\big(Y^\partial(B(i, k) \ti \R)^{l_0/2} \one\{K_i = k\} \big)\\
	&\le \sup_{i \in \Z^d}\sum_{k \ge 1}\sqrt{\mathbf E\big(Y^\partial(B(i, k) \ti \R)^{l_0}\big)} \sqrt{\mathbf P(K_i = k)}\\
	&\le\sup_{i \in \Z^d} \sqrt{\mathbf E\big(Y^\partial(B(i, 1) \ti \R)^{l_0}\big)}\sum_{k \ge 1}(2k)^{1 + l_0/2} \sqrt{\mathbf P(K_i = k)}.
\end{align*}
Finally, the sum converges because of {\blue Assumption  \eqref{ass:perco}}.  Note that the first term is finite by applying Theorem \ref{thm:gass} to all faces of $B(i, 1)$.

\smallskip

\noindent{\bf Part (2).}
Set $M: = \sqrt{l_0/2}$.
We proceed similarly as in the proof of part (1). The difference is that instead of the Cauchy-Schwarz inequality, we use the H\"older inequality with $q = M$ and $p = M/(M - 1)$. Then,
\begin{align*}
\mathbf E\big(Y^\partial(\CC_i \ti I)^M \big) &\le \sum_{k \ge 1}\mathbf E\big(Y^\partial(B(i, k) \ti I)^M \one\{K_i = k\} \big)\\
	&\le  \sum_{k \ge 1}\big(\mathbf E\big(Y^\partial(B(i, k) \ti I)^{pM}\big)\big)^{1/p} {\mathbf P(K_i = k)}^{1/q}\\
	&\le  \big(\mathbf E\big(Y^\partial(B(i, 1) \ti I)^{pM}\big)\big)^{1/p}\sum_{k \ge 1}(2k)^{M+1} \mathbf P(K_i = k)^{1/q}.
\end{align*}
As in part (1), the convergence of the sum follows from the {\blue percolation Assumption \eqref{ass:perco}.}
To ease notation, write $Y^\partial$ short for $Y^\partial(B(i, 1) \ti I)$. Note that $(Y^\partial)^{pM}\le (Y^\partial)^{pM + p}$. Hence, another application of the H\"older inequality gives that

\begin{align*}
\mathbf E\big(Y^\partial(B(i, 1) \ti I)^{pM}\big)
&\le 	 \mathbf E(Y^\partial(B(i, 1)\ti \R)^{pM^2})^{1/M} \mathbf E(Y^\partial(B(i, 1)\ti I))^{1/p} \end{align*}
Since $ 2M^2 \le l_0$, we deduce that the first factor is finite, whereas the second is of order $O\big(|I|^{1/p}\big)$ by the Kac-Rice Lemma  \ref{lm:KR} applied to each facet of $B(i, 1)$. Since, $M \ge 64$, we conclude that $1/p^2 \ge 31/32$, thereby concluding the proof.
\enp

%
%
\subsection{Kolmogorov-Chentsov on $n$-big intervals implies tightness}
\label{sec:grid}

In this section, we show that {\blue Kolmogorov-Chentsov on $n$-big intervals implies tightness.} The key idea is to consider the approach from {\blue \cite[Theorem 2]{davydov}}. In order to be able to apply this result, we recall from Section 3 that $\btt(u)$ is decreasing in $u$.

\def\Y^\partials{Y^\partial^{strat}}

Now, {\blue \cite[Theorem 2]{davydov}} 
{\blue shows that Kolmogorov-Chentsov on $n$-big intervals implies tightness,} provided that $\mathbf E(\btt(I)) \in o\big(\sqrt{|W_n|}\big)$ holds for all $n$-small intervals $I\su \Ib$. {\blue Indeed, we can then apply \cite[Theorem 2]{davydov} with $a_n := |W_n|^{-2/3}$.}
Now, we show that $\mathbf E(\btt(I)) \in o\big(\sqrt{|W_n|}\big)$. We recall from Section 3 that $| \delta (x,v,C) | \le c \# Y^\partial\cap (C\ti [u,\ff ))$ for $v \ge u \ge u_c$. Therefore,    with $Y_j :=Y^\partial (Q_j \ti I)$,
$$\btt(I)\le c\sum_{j \in \bar W_n}Y^\partial\big(\mc C_j \ti [u_c,\ff)\big) \one\{Y_j \ne 0\}.$$
Hence, taking expectations and using Lemma \ref{lem:lev}, we arrive at
\begin{align}
	\label{eq:btty}
	\mathbf E\big(\btt(I)\big)\le c \sum_{j \in \bar W_n}\mathbf E\big(Y^\partial\big(\mc C_j \ti [u_c,\ff)\big)  \one\{Y_j \ne 0\}\big)\big)
\end{align}
Now, we bound the second summand in \eqref{eq:btty}. First, by the H\"older inequality,
	$$\mathbf E\big(Y^\partial\big(\mc C_j \ti [u_c,\ff)\big)  \one\{Y_j \ne 0\}\big)\big)\le\big(\mathbf E\big(Y^\partial\big(\mc C_j \ti [u_c,\ff)\big)^{32}\big)\big)^{\frac1{32}} \mathbf P\big(Y_j \ne0\big).^{\frac{31}{32}}$$
	The first factor is bounded by part (1) of Lemma \ref{lem:lev}. Finally, the second factor is of order $O(|I|^{31/32})$ by Lemma \ref{lm:KR}. Now, referring again to the smallness of $I$ concludes the proof.

%
%
\subsection{Proof of Proposition \ref{vc_prop}}
\label{var_sec}
To prove Proposition \ref{vc_prop}, we rely on the martingale technique that was already implemented in the setting of cylindrical networks \cite{cyl}. To make the presentation self-contained, we recollect here the basic set-up.

We let $\GG_j$ be the $\s$-algebra generated by the restriction of $\W$ to boxes of the form $Q_i$ with {\blue $i \preceq j$}. Then, setting $B_{+, j}^{(n)}(I) :=\btt(I; F) - \btt(I; F^{(Q_j)})$ as in \eqref{eq:bdej},  we decompose the centred increment $\tbnp(I)$ as
$$\btt(I) - \mathbf E(\btt(I)) =\sum_{j \in \Z^d} \mathbf E(B_{+, j}^{(n)}(I)\ba \GG_j ).$$
{\blue Note that for fixed $i_1$, the expression $\sum_{i_1 \preceq j \preceq i_2} \mathbf E(B_{+, j}^{(n)}(I)\ba \GG_j)$ is a forward martingale in $i_2$. In \eqref{eq:csho}, we will show that it is square-integrable so that it converges to $\btt(I) -    \mathsf E(\btt(I)\ba \GG_{i_1 - 1})$ by martingale convergence. Another application of martingale convergence then shows the convergence as $i_1 \to -\ff$. We refer to \cite[Section 3.1]{bel} for a detailed proof.
}

We now prove the variance bounds in Proposition \ref{vc_prop}.

%
%
\bep[Proof of Proposition \ref{vc_prop} -- variance]
The key observation from \cite{yukCLT} is that $\{B_{+, i}^{(n)}\}_{ i\in \Z^d}$ is a martingale-difference sequence because $\mathbf E(\btt(I; F^{(Q_j)}) \ba \GG_j) = \mathbf E(\btt(I; F) \ba \GG_{j - 1})$.    Hence, the Cauchy-Schwarz inequality implies that
$$	\Var( \btt(I)) = \sum_{j \in \Z^d}\Var\big(\mathbf E(B_{+, j}^{(n)}(I) \ba \GG_j )\big) \le  \sum_{j \in \Z^d}\mathbf E\big(B_{+, j}^{(n)}(I)^2\big).$$
We now let  $B_{+, i, j}$ denote the  { positive} contribution to $B_{+, j}$ coming from critical points in the cube $Q_i$.  {\blue More precisely, recall from Proposition \ref{prop:decompos-b-n} that     
\begin{align*}\beta^+ _{n}(I;F) = \sum_{(x,v)\in \Ys(W_n\times I)}\delta (x,v,C_x,F)_+
\end{align*}
Then, 
$B_{+, i, j} := \sum_{(x,v)\in \Ys(W_n\times I) \cap (Q_i \ti I)}\big(\delta (x,v,C_x,F)_+-\delta (x,v,C_x,F^{(Q_j)})_+ \big)$.
} Then, we have a decomposition
\begin{align}
	\label{eq:bdejp}
	B_{+, j}^{(n)}(I)  =  \sum_{i \in {\blue \bar W_n}}B_{+, i, j}(I).
\end{align}
Now,  let $M = 16$ and $p' =16/15$. Then, by Cauchy-Schwarz and H\"older,
\begin{align}
	\label{eq:csho}
	\mathbf E\big(B_{+, j}^{(n)}(I)^2\big)^{1/2}\hspace{-.2cm} &\le \hspace{-.2cm}\sum_{i \in {\blue \bar W_n}}\hspace{-.1cm}\hspace{-.1cm}\sqrt{\mathbf E\big(B_{+, i, j}(I)^2\big)} 
	\hspace{-.1cm}\le \hspace{-.2cm}\sum_{i \in {\blue \bar W_n}}\hspace{-.2cm} \mathbf E\big(B_{+, i, j}(I)^{2M}\big)^{(2M)^{-1}}\hspace{-.3cm}\mathbf P(B_{+, i, j}(I) \ne 0)^{(2p')^{-1}}
\end{align}
We claim that the first factor is of constant order. Indeed, we first note that $F$ and $F^{(Q_j)}$ have the same distribution. Moreover, by the weak Morse inequality, the Betti number of a component is bounded by the number of critical points. Finally, Lemma \ref{lem:mom1} implies that
$\mathbf E\big(Y^\partial(\CC_0 \ti \R)^{2M} \big) < \ff,$
thus showing the asserted finiteness of the first factor.

For the second factor, an application of \eqref{eq:bde} shows that
\begin{align*}
 \mathbf P\big(B_{+, i, j}(I) \ne 0\big)^{1/5}\mathbf P\big(B_{+, i, j}(I) \ne 0\big)^{4/5} \in O\big(|i - j|^{-\g d/5} |I|^{4/5}\big)
\end{align*}
Therefore,
$\mathbf E\big(B_{+,j}^{(n)}(I)^2\big)^{1/2} \le  O\big( \sum_{i \in {\blue \bar W_n}}|i - j|^{-\g d/8} |I|^{3/8}\big)$.
Hence, we conclude that if $d_{j, n}:=\dist(j, W_{2n}) \le n^{1/d}$, then $\mathbf E\big(B_{+, j}^{(n)}(I)^2\big) \in O(|I|^{3/4})$. Moreover, if $d_{j, n} \ge n^{1/d}$, then
$\mathbf E\big(B_{+, j}^{(n)}(I)^2\big) \in O(d_{j,n}^{2(1 - \g/8)d}|I|^{3/4}).$
Finally,
$\sum_{j \co d_{j, n} \ge n}d_{j,n}^{2(1 - \g/8)d} \in |W_n|^{3 - \g/4}.$
Thus, recalling the assumption $\g> 8$ concludes the proof.

\enp

Hence, it remains to establish the cumulant bounds in Proposition \ref{vc_prop}. To achieve this goal, we need to have a suitable control on the correlation structure. To make this precise, we let $\ddd(\bs z) := \max_{\{S, T\} \prec \{i, j, k, \ell\}} \ddd_{S, T}(\bs z)$, where the maximum is taken over all possible partitions of $\bs z = \{i, j, k, \ell\}\su\Z^d$ into two non-empty groups, and where $\ddd_{S, T}(\bs z) := \dist\big({S, T}\big)$ is the inter-partition distance, when partitioning $\bs z $ into two groups indexed by $S$ and $T$, respectively. We set $\bs \DD(I) := \big\{B_{+, i}^{(n)}(I), B_{+, j}^{(n)}(I), B_{+, k}^{(n)}(I), B_{+, \ell}^{(n)}(I)\big\}$. Again, we state the decorrelation property now, and postpone the proof.

%
%
\bel[Spatial decorrelation]
\label{dec_lem}
Let ${ l_0} > 2^{13}$ and $\g > 1$.
Let $\bs z \su \Z^d$ with $|\bs z| \le 4$. Then, for every $\e > 0$,
$$\big|c_4\big(B_{+, i}^{(n)}(I), B_{+, j}^{(n)}(I), B_{+, k}^{(n)}(I), B_{+, \ell}^{(n)}(I)\big)\big|\in O\big({|W_n|^4} \ddd(\bs z)^{-\g d(1 - 8/{l_0})}\big).$$
\enl
Relying on Lemma \ref{dec_lem}, we now complete the proof of the cumulant bound. To ease notation, we henceforth drop the dependence on $I$ in the quantity $B_{+, i}^{(n)}(I)$.

%
%
\bep[Proof of Proposition \ref{vc_prop} -- cumulants]
First, by multilinearity of cumulants,
\begin{align}
	\label{eq:ijkl}
	c_4\big(\tbnp(I)\big) = \sum_{i, j, k, \ell \in \Z^d}a_{i,j,k, \ell} c_4(B_{+, i}^{(n)}, B_{+, j}^{(n)}, B_{+, k}^{(n)}, B_{+, \ell}^{(n)}),
\end{align}
where the $a_{i, j, k, \ell} \ge 1$ are suitable combinatorial constants. They only depend on which indices $i, j, k, \ell$ are equal.

We now decompose \eqref{eq:ijkl} into contributions with indices in the set $\mc I := \{i \co d_{i, n} \le n^{1/d}\}$ and those  with some indices outside $\mc I$. We bound the contributions from $\mc I^c$ and $\mc I$ separately. In both cases, we use that each summand in \eqref{eq:ijkl} can be bounded as
\begin{align}
	\label{eq:ijkl2}
	|c_4(B_{+, i}^{(n)}, B_{+, j}^{(n)}, B_{+, k}^{(n)}, B_{+, \ell}^{(n)})| \in O\big(\prod_{m\in\{i, j, k, \ell\}}\mathbf E((B_{+, m}^{(n)})^4)^{1/4}\big).
\end{align}
Now, by Lemma \ref{lem:mom1}, we have
$\mathbf E(|B_{+, i}^{(n)}|^4)^{1/4} \in O\big(|W_n|^{5/4} d_{i,n}^{-\g d/5 }\big)$. In particular,
$$\sum_{i \co d_{i, n} > n^{1/d}}\mathbf E(|B_{+, i}^{(n)}|^4)^{1/4} \in O\big(|W_n|^{1 + 5/4  - \g/5}\big),$$
so that
\begin{align*}
	\sum_i\mathbf E(|B_{+, i}^{(n)}|^4)^{1/4}  &= \sum_{i : d_{i,n}\le n^{1/d}}\mathbf E(|B_{+, i}^{(n)}|^4)^{1/4}  + \sum_{i:d_{i,n}> n^{1/d}}\mathbf E(|B_{+, i}^{(n)}|^4)^{1/4} 
	\in O(  | W_n | ),
\end{align*}
and
\begin{align*}
	&\sum_{\mc I^c}|c_4(B_{+, i}^{(n)}, B_{+, j}^{(n)}, B_{+, k}^{(n)}, B_{+, \ell}^{(n)})|
	\le C\hspace{-.6cm}\sum_{i\co d_{i,n}> n^{1/d}}\hspace{-.4cm}\mathbf E(|B_{+, i}^{(n)}|^4)^{\frac14}\Big(\sum_{j \in\Z^d}\mathbf E(|B_{+, j}^{(n)}|^4)^{\frac14}\Big)^3
	\hspace{-.2cm}	\in O(|W_n|^{21/4  - \g/5}).
\end{align*}
 Hence, noting that $\g> 22$ shows that the last line is in $o(|W_n|)$.

Therefore, it remains to deal with indices in $\mc I$.
Here, we partition the sum over $\mc I$ again into two parts $\Sigma_1 + \Sigma_2$, where $\Sigma_1$ and $\Sigma_2$ are determined as follows. The part $\Sigma_1$ contains all summands corresponding to indices $(i, j, k, \ell) \in \mc I$ such that $\ddd(\{i, j, k, \ell\})\le |W_n|^{\e/d}$ with $\e= 1/18$, and $\Sigma_2$ contains the remaining summands.

The sum $\Sigma_1$ consists of $O(|W_n|^{1 + 3\e})$ summands each of which is of constant order, because of the  bound \eqref{eq:ijkl2}. Thus, $\Sigma_1 \in O\big(|W_n|^{7/6}\big)$. Second, we note that $\Sigma_2$ consists of $O(|W_n|^4)$ summands. Moreover, by Lemma \ref{dec_lem} each of them is in $O\big(|W_n|^{{4-\g\e(1 - 4/\sqrt{l_0})/(2d)}}\big)$. Thus, since {$\g\e(1 - 4/\sqrt{l_0}) \ge 14d$}, we conclude.
\enp

It remains to prove the spatial decorrelation asserted in Lemma \ref{dec_lem}.  To that end, we will proceed along the blueprint provided in \cite[Lemma 2]{cyl}.
\bep[Proof of Lemma \ref{dec_lem}]
{
We may assume that
$k_0:=\ddd(\bs z) := \dist\big(\{i, j\}, \{k, \ell\}\big)$. The other cases are similar but easier. To make the field-dependence clear, we write $B_{+, i}^{(n)}(\tilde F)$ when 
we compute the functional with respect to some perturbation $\tilde F$ of the original field $F$. 

Moreover, by the cluster-decomposition of the cumulant from \cite[Lemma 5.1]{barysh}, it suffices to show the claim when replacing the cumulant by 
$\Cov\big(B_{+, i}^{(n)}B_{+, j}^{(n)},B_{+, k}^{(n)}B_{+, \ell}^{(n)} \big).$
Letting $\bs z' =(i', j', k', \ell') \in \Z^{4d}$, this covariance decomposes as
$\sum_{\bs z'}\Cov\big(B_{+,i', i}B_{+,j', j},B_{+,k', k}B_{+, \ell', \ell} \big),$
 {where $B_{+,i',i}$ }denotes the contribution to $B_{+, i}^{(n)}$ coming from critical points in the cube $Q_{i'}$. Also note that the number of $i'$ such that $Q_{i'} \cap W_n \ne \es$ is of order $O(|W_n|)$.
 Recall that $H_{i,k}$ is the half-space of points closer to $k$ than $i$. Also recall the resample field $F^{(i,k)}$. Then, $F^{(i, k)}$ and $F^{(k, i)}$ are independent, which implies
\begin{align*}
	&\Cov\big(B_{+, i}^{(n)}B_{+, j}^{(n)},B_{+, k}^{(n)}B_{+, \ell}^{(n)} \big)\\
	&=\Cov\big(B_{+, i}^{(n)}B_{+, j}^{(n)}-B_{+, i}^{(n)}(F^{(i, k)})B_{+, j}^{(n)}(F^{(i, k)}),B_{+, k}^{(n)}B_{+, \ell}^{(n)} \big) \\
&\quad+ \Cov\big(B_{+, i}^{(n)}(F^{(i, k)})B_{+, j}^{(n)}(F^{(i, k)}),B_{+, k}^{(n)}B_{+, \ell}^{(n)} - B_{+, k}^{(n)}(F^{(k,i)})B_{+, \ell}^{(n)}(F^{(k,i)}) \big).
\end{align*}
We now explain how to bound the first of those covariances, noting that the arguments for the second are analogous. First,  by the Cauchy-Schwarz inequality,
\begin{align*}
&\Big|\Cov\big(B_{+,i', i}B_{+,j', j} - B_{+,i', i}(F^{(i,k)})B_{+, j', j}(F^{(i,k)}),B_{+,k', k}B_{+, \ell', \ell}  \big)\Big| \\
	&\le\sup_{m', m \in \Z^d}\sqrt{\mathbf E\big((B_{+,m, m'})^4\big)} 
	\sqrt{\mathbf E\big(\big(B_{+,i', i}B_{+,j', j} - B_{+,i', i}(F^{(i,k)})B_{+,j', j}(F^{(i,k)})\big)^2 \big)}.
\end{align*}By the moment bound in Lemma \ref{lem:lev}, the second factor remains bounded.
We write $J_2$ for the second factor. 
Then, by the H\"older inequality with $q' = M > 1$ and $p' = M/(M - 1)$, 
\begin{align*}
	J_2&\le2 \mathbf P\big(\big(B_{+,i', i}B_{+,j', j}\ne
		  B_{+,i', i}(F^{(k,i)})B_{+,j', j}(F^{(k,i)}) \big)^{1/(2p')} 
	 \hspace{-.4cm}\sup_{m, m' \in \Z^d}\mathbf E\big(\big|B_{+,m, m'}\big|^{4M}\big)^{1/(2M)}.
\end{align*}
Setting $M:={l_0}/8$, we obtain $\mathbf P\big(B_{+,i', i} \ne B_{+, i', i}(F^{(i,k)})\big) \in O(\ddd(z)^{-\g d + \e})$ by  Lemma  \ref{lm:proba-topo},   and the arguments for $j$ are analogous.
 }
\enp

 \section*{Acknowledgements}
	{ We are thankful to the two anonymous referees for their very careful reading of the manuscript and many useful suggestions to improve the quality of the manuscript.} We also thank  F.~Severo and I.~Wigman for very helpful suggestions and references to literature, as well as  Stephen Muirhead, who also helped detect problems in the first version of the article.
This research was conducted while the second author was in Paris, supported by the {\it P\lowercase{ROGRAMME d'iNVITATIONS INTERNATIONALES SCIENTIFIQUES,
CAMPAGNE 2025}} from Université Paris Cité.
CH was supported by a research grant (VIL69126) from VILLUM FONDEN
\bibliographystyle{abbrv} 
\bibliography{../../lit}

\end{document}